\documentclass[reqno,12pt]{amsart}
\usepackage{amssymb}
\usepackage{amsmath}
\usepackage{enumerate}
\usepackage{amsfonts}
\usepackage{color}
\usepackage{graphics}
\usepackage{comment}

\usepackage[small]{pb-diagram}

\textwidth=13.5cm
\textheight=23cm
\parindent=16pt
\oddsidemargin=-0.5cm
\evensidemargin=-0.5cm
\topmargin=-0.5cm

\newtheorem{theorem}{Theorem}[section]
\newtheorem{lemma}[theorem]{Lemma}

\theoremstyle{remark}

\theoremstyle{remark}
\newtheorem*{note}{Remark}

\theoremstyle{remark}
\newtheorem*{notation}{\textbf{Notation}}

\theoremstyle{definition}
\newtheorem{definition}[theorem]{Definition}
\newtheorem{example}{Example}

\numberwithin{equation}{section}

\DeclareMathOperator{\supp}{supp}

\DeclareMathOperator{\sign}{sign}

\newcommand{\e}{\text{\bf E}}
\newcommand{\p}{\text{\bf P}}

\renewcommand{\sp}{\par\vspace{1mm}}
\renewcommand{\L}{\text{L\'{e}vy} }
\newcommand{\F}{\ensuremath{\mathcal F_{\mathrm{cconv}}(\R^d)} } 
\newcommand{\FK}{\ensuremath{\mathcal F_{\mathrm{cconv}}^K(\R^d)} }
\newcommand{\K}{\ensuremath{\K_{\text{conv}}(\R^d)} }

\newcommand{\N}{\ensuremath{\mathbb{N}}}

\newcommand{\R}{\ensuremath{\mathbb{R}}}

\newcommand{\s}{\ensuremath{\mathbb{S}}}
\newcommand{\B}{\ensuremath{\mathbb{B}}}

\newcommand{\one}{\ensuremath{\mathbf 1}}

\def\8{\infty}
\def\blab#1{\begin{equation}\label{#1}}
\def\elab{\end{equation}}

\def\blab#1{\begin{equation}\label{#1}}
\def\elab{\end{equation}}

\setcounter{secnumdepth}{4}
\setcounter{tocdepth}{1}

\usepackage[toc,page]{appendix}


\title[Fuzzy L\'{e}vy subordinators]{L\'{e}vy subordinators in cones of fuzzy vectors}

\author[J. Schneider]{Jan Schneider}
\address{Department of Computer Science and Management\\
Wroclaw University of Science and Technology\\
ul. Smoluchowskiego 25\\
50-372 Wroclaw, Poland}
\email{jan.schneider@pwr.edu.pl}

\author[R. Urban]{Roman Urban}
\address{Institute of Mathematics\\
Wroclaw University\\
Plac Grunwaldzki 2/4\\
50-384 Wroclaw, Poland}
\email{urban@math.uni.wroc.pl}

\subjclass[2000]{}
\keywords{L\'evy processes in Banach spaces, $K$-positive fuzzy L\'{e}vy process, cone-subordinators, cones in Banach spaces, Pettis integral, Bochner integral, convex sets, fuzzy sets, fuzzy vectors, support function, Hausdorff distance}

\begin{document}

\begin{abstract}
The general problem of how to construct stochastic processes which are confined to stay in a predefined cone (in the one-dimensional but also multi-dimensional case also referred to as  \emph{subordinators}) is of course known to be of great importance in the theory and a myriad of applications.\par
But fuzzy stochastic processes are considered in this context for the first time in this paper:\par
By first relating with each proper convex cone $C$ in $\R^{n}$ a certain cone of fuzzy vectors $C^*$ and subsequently using some specific Banach space techniques we have been able to produce as many pairs $(L^*_t, C^*)$ of fuzzy \L processes $L^*_t$ and cones $C^*$ of fuzzy vectors such that $L^*_t$ are $C^*$-$\,$subordinators.
\end{abstract}

\maketitle
\section*{Summary \& Structure}

In this note we show how a cone-valued \L subordinator in the space of $d$-dimensional fuzzy vectors may be obtained.
\par
\subsection*{Technique:}\sp
The starting point of our considerations is, as so often, the one-to-one isometric correspondence between the metric space of fuzzy vectors, $\F$, and the associated convex cone $\mathcal{H}=j(\F)$ of support functions contained in the Banach space $L^{p}\bigl((0,1]\times \s^{d-1}\bigr),\, p\in[1,\infty]$, defined by the (canonical) embedding $j:$
$$j(x^*) = s_{x^*} (\cdot,\cdot). $$
We then take the subset $\FK$ of $\F$ consisting of only those fuzzy vectors $x^*\in\F$ whose characterizing functions $\xi_{x^*}$ have their supports contained in a fixed, proper convex cone: $\supp{\xi_{x^*}} \subseteq K\in \R^d.$\par\noindent
A stochastic process consisting of a collection of fuzzy random variables taking values in $\mathcal{F}^K_{\text{cconv}}(\R^d)$ is accordingly termed a ``$K$-positive \L process" (subordinator with respect to $\mathcal{F}^K_{\text{cconv}}(\R^d)$). \sp
Its construction is the very aim of this paper:\sp

We show that, with $K$ being a proper convex cone in $\R^d$, the induced cone of fuzzy vectors $\mathcal{F}^K_{\text{cconv}}(\R^d)$ is proper and convex and subsequently that the $j$-induced cone of support functions $\mathcal K=j\left(\FK\right)$ is also proper and convex. This property and others we prove, show the framework of the 2006 work of V. P\'{e}rez-Abreu and A. Rocha-Arteaga \cite{P-AR-A06}, on L\'evy processes in proper cones of Banach spaces, to be adaptable to our fuzzy setting.

With a little algebra the sought after $K$-positive process in \FK is obtained as the structure-preserving $j$-preimage of a $\mathcal{K}$-valued process constructed in the Banach space $L^p\left((0,1]\times \s^{d-1}\right).$

\subsection*{Motivation:}
\subsubsection*{Why do we consider L\'evy processes in cones?}
First of all the study of L\'evy processes and more general infinitely divisible, and stable distributions in cones is a natural, important topic which has been attracting the attention of many researchers working in probability theory. See for example works of P\'{e}rez-Abreu and Rocha-Arteaga \cite{P-AR-A06} on L\'evy processes with zero Brownian component in a cone of a Banach space, P\'{e}rez-Abreu and Rosi\'{n}ski \cite{PerRos07} on infinitely divisible distributions in cones or the work of Davydov, Molchanov, and Zuyev \cite{DavMolZuy08} on stable distributions on cones.

\subsubsection*{Why do we consider L\'evy processes with zero Brownian component?}
For a fuzzy L\'evy process to incorporate a Brownian motion component it would be necessary to first be able to construct a Brownian motion contained in a cone in an infinite dimensional Banach space.
The general Levy triplet could then be constructed.
This however is a nontrivial and important problem, which many good researchers have been and are currently working on. Partial results in this and related directions are due to, to name a few: Banuelos, DeBlassie, Garbit, Raschel, Smits \cite{BanSmi97,BanDeb06a,BanDeb06b,Deb12,GarRas16} who investigate processes which are constrained to stay in a given cone for some finite time $t \leq T.$
See also the monograph of Da Pratto and Zabczyk \cite{DapZab02}.
To our knowledge, to this date no such process has been constructed.

In view of this knowledge we had to concentrate our findings on L\'evy processes whose generator does not contain a Brownian component.

In fact our original motivation for this present work stems from our interest in, and research done on, fuzzy Brownian Motion and fuzzy Central Limit Theorems~\cite{SU17}.\sp

Indeed, up to this point, of all fuzzy \L processes really only fuzzy Brownian motion $B_t^*$ has been the center of attention of, and investigated by a number of authors (and by several different methods) but then in any case the achieved construction has been of the form
$$B^*_t(\omega)=\e B_t^*\oplus\one_{\{b_t(\omega)\}},$$
where $b_t$ is the standard Brownian motion in $\R^d,$ $\e$ stands for the (Bochner, Aumann or Frechet) expectation operator, and $\one_{\{b_t\}}$ stands for the characterizing function of the crisp vector $b_t(\omega)$ (see \cite{Bon12}). Thus, for a fixed $\omega,$ $B_t^*(\omega)$ is a fuzzy vector $m^*=\e B_t^*$ (which does not depend on $t$) moving along the trajectory of a standard (non-fuzzy) Brownian motion $b_t(\omega).$\sp

This very substantial limitation is due largely to difficulties related to the $j$-embedding being homogeneous positively only: $j(x^*\oplus (-1)\cdot y^*) \neq j(x^*) - j(y^*),$ and the resulting lack of closedness of the cone $\mathcal{H}$ with respect to the operation of subtraction in this sense.\sp

This present construction resolves this issue of closedness, but by its very design produces a L\'{e}vy process whose defining triplet has a zero Brownian motion component, that is we receive a process formed by the sum of a compound Poisson process and a square integrable pure jump martingale.\sp

\subsection*{Future perspective:}

This paper gives an important result for \L processes  with zero Brownian component.\sp

Regarding the definition of fuzzy Brownian motion we have recently been developing a very different and novel approach, namely via Dirichlet forms, and are expecting to publish our results shortly.\sp

Another possible agenda for future research may include applications of the obtained theoretical result in physics and finance, or wherever else \L subordinators play a role.

\subsection*{Structure of the paper:}
The text is divided into the following sections:\sp

In section 1 basic definitions of fuzzy vectors, variables and processes as used in this paper are gathered.\par
Section 2 gives the definitions of general Banach space valued \L processes and subordinators. Two theorems relevant for this paper are stated.\par
Section 3 is a review of how Banach spaces and fuzzy vectors are usually linked by support functions techniques.\par
Section 4 gives the actual construction.\sp

\noindent The appendices contain definitions of the Gelfand-Pettis and Bochner integrals as well as the sketch of a proof of Theorem~\ref{Th10}~\cite{P-AR-A06}.

\section{Preliminaries on fuzzy vectors, variables and processes}

\subsection{\textsc{Fuzzy vectors}}
A {\em $d$-dimensional fuzzy vector $x^*$} is defined by and may be identified with its (real-valued) {\em characterizing function} $\xi_{x^*}(\cdot):\R^d\to [0,1],$ (see e.g. \cite{Vie11}). In this paper we work with vectors whose characterizing functions satisfy
\begin{definition}\label{defxi}
The characterizing function $\xi_{x^*}$
of a $d$-dimensional fuzzy vector $x^*$ is a function $\xi_{x^*}:\R^d\to\R$ satisfying:
\begin{itemize}
\item[1)] $\xi_{x^*}:\R^d\to [0,1],$
\item[2)] $\supp{\xi_{x^*}}$ is bounded,
\item[3)] for every $\alpha\in(0,1]$ the so called \emph{$\alpha$-cut $C_\alpha(x^*)$} of the fuzzy vector $x^*$,
$C_\alpha(x^*)= \{x\in\R^d:\xi_{x^*}(x)\geq\alpha\}$
is a non-empty, compact, and convex set, as is $C_0(x^*):=\overline{\{x\in\R^d: \xi_{x^*}(x)>0\}}=\supp\xi_{x^*}.$
\end{itemize}
\end{definition}
\begin{notation}
In the following we denote the set of all $d$-dimensional fuzzy vectors satisfying Definition \ref{defxi} by $\mathcal F_{\text{cconv}}(\R^d)$.
\end{notation}
\subsubsection{Fuzzy arithmetic}
We start with the Minkowski arithmetic performed on subsets of $\R^d.$
\begin{definition}
Let $A,B\subset\R^d$ and $\lambda\in\R$. Then
\begin{equation*}
\begin{aligned}
A+B:=&\{a+b:\,a\in A,\,B\in B\},\\
A\cdot B:=&\{ab:\,a\in A,\,b\in B\},\\
\lambda\cdot A:=&\{\lambda a:\,a\in A\}.
\end{aligned}
\end{equation*}
\end{definition}
The Minkowski arithmetic of sets directly induces an arithmetic of $d$-di\-men\-sio\-nal vectors from $\mathcal F_{\text{cconv}}(\R^d)$ via $\alpha$-cuts:
\begin{definition}
The sum $x^*\oplus y^*$ and multiplication $x^*\odot y^*,$ of two fuzzy $d$-dimensional fuzzy vectors $x^*$ and $y^*$
are defined via $\alpha$-cuts as follows
\begin{equation*}
\begin{aligned}
C_\alpha(x^*\oplus y^*)=\; &C_\alpha(x^*)+C_\alpha(y^*),\\
C_\alpha(x^*\odot y^*)=\; &C_\alpha(x^*)\cdot C_\alpha(y^*),\\
\end{aligned}
\end{equation*}
Similarly, the multiplication of a fuzzy vector $x^*$ by a real number $\lambda,$ $\lambda\odot x^*$ is defined by the equation
\begin{equation*}
C_\alpha(\lambda\odot x^*)=\lambda\cdot C_\alpha(x^*).
\end{equation*}
\begin{notation}
Let $C(\R^d)$  denote the set of all non-empty, closed subsets of $\R^d.$ By $C_{\text{c}}(\R^d)$ ($C_{\text{conv}}(\R^d)$, resp.) we denote the non-empty space of all compact subsets of $\R^d$ (the non-empty space of all closed convex subsets of $\R^d$, resp.) and finally, $C_{\text{cconv}}(\R^d)$ is the space of all non-empty compact and convex subsets of $\R^d$.\vspace{1mm}\par\noindent
$\R^d$ is equipped with the classical $\ell^2$-norm $\|x\|_{\ell^2}=\left(\sum_1^dx_i^2\right)^{1\slash 2}$ and the inner product $\langle x,y\rangle =\sum_1^dx_iy_i.$
\end{notation}

\smallskip
We assume throughout the paper that $d>1.$ In the case $d=1$ we would be dealing with fuzzy intervals (numbers), which require a different set of techniques.
\subsection{Support functions}\label{support}
We start with the definition of the support function of a closed convex set in $\R^d:$\sp
Let $C\subset C_{\text{cconv}}(\R^d).$ By  $\s^{d-1}$ we denote the unit sphere in $\R^d,$ i.e.,  $\s^{d-1}=\{x\in\R^d: \|x\|_{\ell^2}=1\}.$
\begin{definition} The {\em support function of a set} $C\subset C_{\text{cconv}}(\R^d)$ is the function  $s_C:\s^{d-1}\to\R$ defined by
\begin{equation*}
s_C(u)=\sup_{a\in C} \langle u,a\rangle,\qquad u\in \s^{d-1}.
\end{equation*}
\end{definition}
Now let $x^*$ be a $d$-dimensional fuzzy vector. By 3) of Definition~\ref{defxi} its $\alpha$-cuts $C_\alpha(x^*)$ belong to $C_{\text{cconv}}(\R^d)$ so we can define the {\em support function $s_{x^*}$ of a fuzzy vector} $x^*$ as follows:
\begin{definition}\label{sfff}
\begin{equation}\label{sf}
s_{x^*}(\alpha,u)=\sup_{a\in C_\alpha(x^*)}\langle u,a\rangle,\qquad\alpha\in(0,1]\text{ and }u\in \s^{d-1}.
\end{equation}
\end{definition}
The support function $s_{x^*}(\cdot,\cdot)$ of $x^*\in\mathcal F_{\text{cconv}}(\R^d)$ has the following properties (\cite{DK94,Kor97}):
\begin{itemize}
\item [(i)] For every $\alpha\in(0,1]$, $s_{x^*}(\alpha,\cdot):\s^{d-1}\to\R$ is a  continuous function.
\item [(ii)] The support function is positively homogeneous with respect to the $u$ variable, i.e., for all real $\lambda>0$ and all $\alpha\in(0,1]$, $s_{x^*}(\alpha,\lambda u)=\lambda     s_{x^*}(\alpha,u).$
\item [(iii)] For every $\alpha\in(0,1]$, $s_{x^*}(\alpha,\cdot)$ is sub-additive, i.e., for all $u,v\in \s^{d-1},$ $s_{x^*}(\alpha,u+v)\leq  s_{x^*}(\alpha,u)+s_{x^*}(\alpha,v).$
\item [(iv)] For every $u\in \s^{d-1}$ the function $s_{x^*}(\cdot,u):(0,1]\to\R$ is left continuous and non-increasing, i.e., for all $0<\alpha\leq\beta\leq 1,$ $s_{x^*}(\alpha,u)\geq s_{x^*}(\beta,u).$
\end{itemize}

The proof of the following lemma is straightforward:
\begin{lemma}\label{addsup}
For every $x^*, y^*\in\mathcal F_{\text{cconv}}(\R^d)$ and for every $\lambda\in\R,$ we have that
\begin{equation}\label{additive}
s_{x^*\oplus y^*}(\alpha,u)=s_{x^*}(\alpha,u)+s_{y^*}(\alpha,u)
\end{equation}
and
\begin{equation}\label{pseudolin}
s_{\lambda\odot x^*}(\alpha,u)=|\lambda| s_{\sign{(\lambda)} \odot x^*}(\alpha,u).
\end{equation}
\end{lemma}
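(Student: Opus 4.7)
The plan is to prove both identities directly from the definitions, unpacking the support function via its defining supremum and using the $\alpha$-cut characterizations of $\oplus$ and $\odot$ given in the preceding subsection.

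For the additivity identity \eqref{additive}, I start from Definition~\ref{sfff} applied to $x^* \oplus y^*$, so that
\[
s_{x^* \oplus y^*}(\alpha, u) = \sup_{c \in C_\alpha(x^* \oplus y^*)} \langle u, c\rangle
= \sup_{c \in C_\alpha(x^*) + C_\alpha(y^*)} \langle u, c\rangle,
\]
where in the second equality I use the definition of $\oplus$ via Minkowski addition of $\alpha$-cuts. Writing $c = a + b$ with $a \in C_\alpha(x^*)$ and $b \in C_\alpha(y^*)$, the inner product splits as $\langle u, a\rangle + \langle u, b\rangle$ and the supremum separates into the two independent suprema, yielding $s_{x^*}(\alpha, u) + s_{y^*}(\alpha, u)$. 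The $\alpha$-cuts here are nonempty and compact by item 3) of Definition~\ref{defxi}, so the suprema are attained and the splitting is valid without any subtlety.

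For the pseudolinearity identity \eqref{pseudolin}, I use $C_\alpha(\lambda \odot x^*) = \lambda \cdot C_\alpha(x^*)$, so that
\[
s_{\lambda \odot x^*}(\alpha, u) = \sup_{a \in C_\alpha(x^*)} \langle u, \lambda a\rangle = \sup_{a \in C_\alpha(x^*)} \lambda \langle u, a\rangle.
\]
I then split on the sign of $\lambda$. When $\lambda \ge 0$, the scalar factors out to give $\lambda\, s_{x^*}(\alpha, u) = |\lambda|\, s_{\mathrm{sign}(\lambda)\odot x^*}(\alpha, u)$, interpreting $\mathrm{sign}(0)\odot x^*$ and $1\odot x^*$ appropriately (both sides vanish when $\lambda = 0$). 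When $\lambda < 0$, factoring out $\lambda$ reverses the supremum into an infimum, which I then rewrite as $|\lambda|\sup_{a \in C_\alpha(x^*)} \langle u, -a\rangle = |\lambda| \sup_{b \in (-1)\cdot C_\alpha(x^*)} \langle u, b\rangle = |\lambda| s_{(-1)\odot x^*}(\alpha, u)$, which is exactly $|\lambda| s_{\mathrm{sign}(\lambda)\odot x^*}(\alpha, u)$.

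Neither step presents a real obstacle; the calculation is essentially that the support functional is linear in its set argument with respect to Minkowski sums and positively homogeneous in scaling, and the sign case for negative $\lambda$ is merely bookkeeping to convert the resulting $\inf$ back into a $\sup$ over the reflected set. The main thing to be careful about is keeping the formula uniform in $\lambda$ by packaging the reflection into the right-hand factor $s_{\mathrm{sign}(\lambda)\odot x^*}$, rather than the naive (and false) identity $s_{\lambda \odot x^*} = \lambda\, s_{x^*}$.
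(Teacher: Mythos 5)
Your proof is correct and is exactly the direct verification the paper has in mind: the paper offers no written proof at all, merely declaring the lemma ``straightforward,'' and your computation (splitting the supremum over the Minkowski sum for \eqref{additive}, and the sign case analysis converting the infimum back into a supremum over the reflected cut for \eqref{pseudolin}) is the standard argument it is alluding to. The only cosmetic remark is that attainment of the suprema is not actually needed for the splitting step --- finiteness, which compactness of the $\alpha$-cuts already guarantees, suffices.
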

\end{definition}\sp

\subsection{\textsc{Fuzzy Random variables and processes}}
We use the following definition of a fuzzy random variable:

\begin{definition}
By a fuzzy random variable we understand a measurable function $X^*:(\Omega,\mathcal E,\p)\to (\mathcal F_{\text{cconv}}(\R^d),\mathcal B) =:S.$\sp
\noindent Here $(\Omega,\mathcal E,\p)$ is some probability space and $S$ is the space of fuzzy vectors, equipped with a metric $d_p$, for some $p\in[1,\infty].$ The associated Borel $\sigma$-field $\mathcal B$ is generated by the balls which are open in the chosen metric $d_p$.\sp

As usual, $X^*$ is termed an $(\mathcal E - \mathcal B)$-measurable function if and only if for every $B\in\mathcal B$ the inverse image ${X^*}^{-1}(B):=\{\omega\in\Omega:X^*(\omega)\in B\}$ belongs to the $\sigma$-field $\mathcal E.$
\end{definition}

\noindent
The notion of independence of fuzzy random variables transfers from the classical case verbatim:
\begin{definition}
Two fuzzy random variables $X^*$ and $Y^*$ are independent if and only if $\p(X^*\in B_1\text{ and }Y^*\in B_2)=\p(X^*\in B_1)\cdot\p(Y^*\in B_2),$ for all $B_1,B_2$ belonging to the pertinent $\sigma$-field $\mathcal{B}.$
\end{definition}

\begin{definition}
A fuzzy random process is a collection of fuzzy random variables indexed by some set $T.$
\end{definition}

\begin{definition}
The support function $s_{X^*}$ of a fuzzy random variable $X^*$ is defined by the equation
\begin{equation*}
s_{X^*}(\alpha,u)(\omega)=s_{X^*(\omega)}(\alpha,u)
\end{equation*}
\end{definition}

\begin{notation} To notationally distinguish real objects from their fuzzy counterparts we use the starred form for the fuzzy object, i.e. $x\in\R^d$, $x^*\in\F$, for vectors; $X$, $X^*$ for random variables, and $X_t$, $X_t^*$ for random processes.
\end{notation}

\noindent For general reference regarding general and probabilistic fuzzy concepts see the monograph~\cite{Vie11}.

\section{Preliminaries on L\'evy processes and subordinators in Banach spaces}
\subsection{\textsc{\L processes}}
Let $B$ be a real, separable Banach space (equipped with a norm $\|\cdot\|$).
\begin{definition}\label{B_process}
A $B$-valued L\'evy process $X_t,$ $t\geq 0$ is a stochastic process defined on some probability space $(\Omega,\mathcal E,\p)$ having properties:
\begin{itemize}
\item[(i)] The process $X_t$ starts from zero, i.e., $X_0=0$ a.s.,
\item[(ii)] $X_t$ has independent and stationary increments,
\item[(iii)] $X_t$ is stochastically continuous with respect to the norm, i.e., for every $\varepsilon>0$, $\lim_{s\to t}\p(\|X_t-X_s\|>\varepsilon)=0.$
\item[(iv)] The trajectories of the process $X_t$ are a.s. c\`adl\`ag, i.e., are right-continuous and have left-limits with respect to the norm.
\end{itemize}
\end{definition}
Let $L_t$ be a $B$-valued L\'evy processes.
Its L\'evy-Khinchin formula, which gives the Fourier transform of $L_t$ at a fixed time $t,$ is well known (see e.g. \cite{GS75}). For easy reference we recall the definition here:\par
Let $D_0 = \{x:0 <\|x\|\leq 1\}.$ Then, for every linear functional $\ell\in B^*$ (the topological dual space of $B$),
\begin{equation*}
\e e^{i\ell(X_t)}=e^{t(-\frac{1}{2}\langle A\ell,\ell\rangle+i\ell(\gamma)+\psi(\ell,\Pi))},
\end{equation*}
with
\begin{equation*}
\psi(\ell,\Pi)=\int\left(e^{i\ell(x)}-1-i\ell(x)\one_{D_0}(x)
\right)
\Pi(dx),
\end{equation*}
where $\gamma\in B,$ $A$ is a non-negative self-adjoint operator from $B^*$ to $B$, and $\Pi(dx)$ is the L\'evy measure on $B\setminus\{0\}$ satisfying, for every $\ell\in B^*,$
\begin{equation*}
\int_{D_0}|\ell(x)|^2\Pi(dx)<+\infty.
\end{equation*}
\par
The so called {\em generating triplet} of parameters: $(A,\Pi,\gamma)$ of the L´evy process $X_t$ is unique.
\par
The following result states that in Banach spaces there is a one-to-one correspondence between \L processes and {\em infinitely divisible probability measures.}\footnote{Recall, that a probability measure $\mu$ on a Banach space is called {\em infinitely divisible} if for every $n\in\N$ there exists a probability measure $\mu_n$ such that the $n$-fold convolution of $\mu_n$ gives $\mu,$ i.e., $\mu_n^{*n}=\mu.$}
\begin{theorem}\label{inf-div}
Let $\mu$ be an infinitely divisible probability measure in a Banach space $B.$ Then there is a $B$-valued \L process $L_t$ such that $L_1$ has the law $\mu.$ On the other hand if $L_t$ is a $B$-valued \L process then the law of $L_1$ is an infinitely divisible probability measure in $B.$
\end{theorem}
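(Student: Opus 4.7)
The plan is to prove the two implications separately, treating the direction from L\'evy process to infinite divisibility as almost immediate and concentrating effort on the converse.

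For the easy direction, suppose $L_t$ is a $B$-valued \L process. Fix $n\in\N$ and write
\begin{equation*}
L_1 = \sum_{k=1}^{n} \bigl(L_{k/n} - L_{(k-1)/n}\bigr).
\end{equation*}
By properties (i) and (ii) of Definition~\ref{B_process}, the $n$ summands are independent and identically distributed, with common law $\mu_n$ equal to the law of $L_{1/n}$. Hence the law $\mu$ of $L_1$ satisfies $\mu_n^{*n} = \mu$, which is the definition of infinite divisibility in $B$.

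For the nontrivial direction, assume $\mu$ is infinitely divisible on $B$ and choose probability measures $\mu_n$ with $\mu_n^{*n}=\mu$. First I would construct the finite-dimensional distributions of a candidate process on the dyadic (or all positive rational) times: for $t=m/n$ let the law of $L_t$ be $\mu_n^{*m}$, and for any finite tuple $0\leq t_1<\cdots<t_k$ of rationals declare the increments $L_{t_{i+1}}-L_{t_i}$ to be independent with laws determined by the $\mu_n$'s. A consistency check (using that $\mu_{nm}^{*m}=\mu_n$, which follows from uniqueness of the $n$-th convolution root of an infinitely divisible measure up to a type argument, or alternatively from working directly with Fourier transforms of cylindrical measures) lets me apply the Kolmogorov extension theorem to produce a process $(L_q)_{q\in\Q_+}$ on some probability space $(\Omega,\mathcal E,\p)$ with the prescribed increments.

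Next I would establish stochastic continuity along the rationals: using the L\'evy-Khinchin representation recalled above, the characteristic functionals $\e e^{i\ell(L_t-L_s)}$ converge to $1$ as $t\downarrow s$ along rationals for every $\ell\in B^*$, which upgrades to convergence in probability in norm via standard tightness and separability arguments for Banach-space-valued random variables. Then I would invoke a c\`adl\`ag regularization theorem for Banach-space valued processes with independent increments that are stochastically continuous on a dense countable set (the analogue for Banach spaces of the classical real-valued result, which requires the separability of $B$) to extend $(L_q)_{q\in\Q_+}$ to a process $(L_t)_{t\geq 0}$ whose trajectories are almost surely right-continuous with left limits in norm. This extended process automatically inherits independent and stationary increments, starts at $0$, and its marginal at $t=1$ is $\mu$ by construction.

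The main obstacle is precisely this last passage from a rational-time process to a c\`adl\`ag modification in the Banach space setting: one cannot cite the elementary monotone-path argument used on $\R$, and instead needs a regularization result that exploits the separability of $B$ together with the submartingale inequality applied to $\|L_t-L_s\|$ or to functionals $\ell\circ L$ for $\ell$ in a countable norming subset of $B^*$. Once this is in place, uniqueness of the triplet $(A,\Pi,\gamma)$ attached to $\mu$ guarantees that the constructed process is, in law, the unique \L process with $L_1\sim\mu$.
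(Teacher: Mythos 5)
The paper does not actually prove this theorem; it simply refers to Gikhman and Skorohod \cite{GS75}, so there is no in-paper argument to compare against. Your outline is the standard construction carried out in those classical sources, and its architecture is sound: the telescoping decomposition $L_1=\sum_{k=1}^n\bigl(L_{k/n}-L_{(k-1)/n}\bigr)$ together with properties (i)--(ii) of Definition~\ref{B_process} settles the easy direction completely, and the converse is correctly organized as finite-dimensional distributions on $\Q_+$, then Kolmogorov extension, then stochastic continuity, then a c\`adl\`ag modification using separability of $B$.

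Two places deserve a warning. First, your consistency step leans on ``uniqueness of the $n$-th convolution root of an infinitely divisible measure,'' but convolution roots are not unique in general (one can perturb a root by a shift, or by a factor supported on a compact subgroup), so the relation $\mu_{nm}^{*m}=\mu_n$ can fail for an arbitrary choice of roots. The clean route --- which you mention only parenthetically --- is to bypass the chosen roots entirely: use the L\'evy--Khinchin representation to write $\hat\mu=e^{\psi}$ with a distinguished logarithm $\psi$, define $\mu_t$ for every $t\geq0$ as the probability measure with characteristic functional $e^{t\psi}$ (that this is again the characteristic functional of a genuine probability measure on $B$ is exactly what the representation buys you), and build the increment laws from the continuous convolution semigroup $(\mu_t)_{t\geq0}$, for which consistency is automatic. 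Second, the upgrade from convergence of characteristic functionals to convergence in probability in norm, and the c\`adl\`ag regularization, are genuine theorems in the Banach-space setting rather than routine adaptations of the real-valued case; you correctly flag them, but a complete proof must supply them (tightness of $(\mu_t)_{t\leq1}$ plus separability for the former; a maximal inequality for sums of independent $B$-valued increments, e.g.\ Ottaviani's inequality, for the latter). With those two repairs your argument is precisely the classical one the paper cites.
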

\begin{proof}
For the proof see the classical work by Gikhman and Skorohod \cite{GS75}.\vspace{1mm}

\end{proof}\sp
\noindent
For general reference regarding \L processes please see~\cite{App04} or~\cite{Sat04}. For general reference regarding functional analysis and specifically Banach spaces see~\cite{Yos71}.
\par
\subsection{\textsc{$\mathcal K$-subordinators in Banach spaces}}$ $\sp
Remember that our principal aim in this note is to construct a fuzzy L\'evy process which starts from a proper convex cone $\mathcal{K}$ in a Banach space $B$ and remains in this cone as $t\to\infty.$\sp

We start this paragraph with the general definition of a convex cone in any commutative algebraic structure $\mathcal S$ over some ordered field $F$ which allows for the concept of ``positive scalar".

\begin{definition} A subset $\mathcal K$ of $\mathcal S$ is said to be a {\em convex cone} in $\mathcal S$ if it is closed with respect to addition and to multiplication by positive scalars, i.e., if
\begin{itemize}
\item[(i)] for $x, y \in\mathcal K$ then $x+y \in \mathcal K.$
\item[(ii)] for every $0<\lambda\in F$ and for all $x\in\mathcal K$ it follows that $\lambda\cdot x\in\mathcal K.$
\end{itemize}
\end{definition}

We also use the following
\begin{definition}
A convex cone $\mathcal{K}$ is called {\em proper} if the condition that $x\in K$ and $-x\in K$ implies $x=0.$\sp

A proper cone $\mathcal{K}$ in a Banach space $B$ induces a partial order on $B.$ We write $x_1\leq_{\mathcal{K}}x_2$ iff $x_2-x_1\in \mathcal{K}.$ Accordingly a sequence $x_n$ of elements of $B$ is called {\em $\mathcal{K}$-increasing} ({\em $\mathcal{K}$-decreasing}, resp.) {\em in $B$} if, for all $n\in\N,$ $x_n\leq_{\mathcal{K}}x_{n+1}$ ($x_{n+1}\leq_{\mathcal{K}}x_n$, resp.).
\end{definition}

Recall that in the real values setting a subordinator is defined to be a \L process (often within another stochastic process), which is a.s. increasing in the usual, real sense.\sp
It is not difficult to see that a $B$-valued L\'evy process is $\mathcal K$-increasing if and only if it is $\mathcal K$-valued.
Hence the following definition of a subordinator in the setting of random processes taking values in Banach spaces suggests itself:
\begin{definition}
If a $B$-process is $\mathcal K$-valued for some proper convex cone $\mathcal K\subset B$, we say that it is a {\em $\mathcal K$-subordinator}.
\end{definition}

\subsection{Relevant theorems and definitions}
We end this section with two theorems, which are to be cornerstones of the construction we have set out to achieve. First we need another  definition:
\begin{definition}
Let $\Pi(dx)$ be the L\'evy measure on $B\setminus\{0\}.$ An element $\mathcal I_\Pi\in B$ is a {\em $\Pi$-Pettis centering} if, for every $\ell\in B^*,$ $\int_{D_0}|\ell(x)|\Pi(dx)$ is finite and $\ell(\mathcal I_\Pi)=\int_{D_0}\ell(x)\Pi(dx).$ In this case we write $\mathcal I_\Pi=\int_{D_0}xd\Pi(dx).$ (See Appendix A for a definition of the Gelfand-Pettis integral.)
\end{definition}
\begin{theorem}[P\'erez-Abreu and Rocha-Artega, {\cite[Theorem~10]{P-AR-A06}}]\label{Th10}
Let $\mathcal K$ be a proper cone in a separable Banach space $B.$ Let $X_t$, $t\geq 0$ be a L\'evy process in $B$ with generating triplet $(A,\Pi,\gamma).$ Assume the following three conditions:
\begin{itemize}
\item[(a)] $A=0,$
\item[(b)] $\Pi(B\setminus \mathcal \mathcal K)=0.$
\item[(c)] there exists a $\Pi$-Pettis centering $\mathcal I_\Pi=\int_{D_0}x\Pi(dx)$ such that $\gamma_0:=\gamma-\mathcal I_\Pi\in \mathcal K.$
\end{itemize}
Then the process $X_t$ is a $\mathcal K$-subordinator, i.e., for all $t\geq0,$ $X_t\in \mathcal K.$
\end{theorem}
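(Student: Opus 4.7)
The plan is to reduce $\mathcal{K}$-valuedness of the Banach-space process $X_t$ to the classical theory of real-valued subordinators by testing against functionals in the dual cone. Using (a) and (c) together with the Pettis identity $\ell(\mathcal{I}_\Pi)=\int_{D_0}\ell(x)\,\Pi(dx)$, one rewrites the L\'evy--Khinchin exponent of $X_t$ as
\begin{equation*}
\mathbb{E}\,e^{i\ell(X_t)}=\exp\!\Bigl(t\Bigl[i\ell(\gamma_0)+\int_{B}\bigl(e^{i\ell(x)}-1\bigr)\Pi(dx)\Bigr]\Bigr),
\end{equation*}
the small-jump compensator having been absorbed into the drift $\gamma_0$.

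Specialize now to $\ell \in \mathcal{K}^*:=\{\ell \in B^*: \ell|_{\mathcal{K}}\geq 0\}$. The real-valued process $(\ell(X_t))_{t\geq 0}$ is itself L\'evy, with characteristic exponent (replacing $\ell$ by $s\ell$) equal to $is\ell(\gamma_0)+\int_{[0,\infty)}(e^{isr}-1)\,\nu_\ell(dr)$, where $\nu_\ell:=\ell_*\Pi$ is supported on $[0,\infty)$ because $\Pi$ lives in $\mathcal{K}$ (condition (b)) and $\ell\geq 0$ on $\mathcal{K}$. Since $\ell(\gamma_0)\geq 0$ and $\int_{(0,1]}r\,\nu_\ell(dr)=\int_{D_0}\ell(x)\,\Pi(dx)<\infty$ by the Pettis centering, this is precisely the characteristic exponent of a classical real subordinator, so $\ell(X_t)\geq 0$ almost surely.

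To assemble the vector-level conclusion, assume $\mathcal{K}$ is closed (standard in this setting), so that Hahn--Banach yields $\mathcal{K}=\{x\in B:\ell(x)\geq 0\text{ for all }\ell\in\mathcal{K}^*\}$. Using separability of $B$, extract a countable weak$^*$-dense subset $\{\ell_k\}\subset\mathcal{K}^*$ that still separates $\mathcal{K}$ from its complement; the event $\bigcap_k\{\ell_k(X_t)\geq 0\}$ then has probability one, yielding $X_t\in\mathcal{K}$ almost surely.

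The main obstacle I anticipate is in this final assembly: locating a countable subset of $\mathcal{K}^*$ that characterises $\mathcal{K}$ and handles the a.s.\ null sets uniformly in $\ell$ is delicate in a non-reflexive Banach space. A purely path-level alternative --- approximating $X_t$ by the compound Poisson processes $X_t^{(n)}:=t\gamma_0+\int_0^t\int_{\|x\|>1/n}x\,N(ds,dx)$, which manifestly sit in $\mathcal{K}$ (since $\gamma_0\in\mathcal{K}$, each jump lies in $\mathcal{K}$ by (b), and the cone is closed under addition) --- runs into a parallel difficulty: the deterministic integrals $\int_{\varepsilon<\|x\|\leq 1}x\,\Pi(dx)$ converge to $\mathcal{I}_\Pi$ only in the Pettis (weak) sense, whereas the matching compensated small-jump martingale converges in $L^2(B)$, so marrying the two limits to identify the norm-limit as an element of $\mathcal{K}$ again requires either closedness plus Hahn--Banach separation or a Bochner-type strengthening of the Pettis centering.
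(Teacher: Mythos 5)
Your proof is essentially correct, but it takes a genuinely different route from the one the paper sketches in Appendix B. The paper works at the path level: it introduces the jump-size processes $X_t^{\mathcal K,\Delta_\varepsilon}$ for $\Delta_\varepsilon=\{\|x\|>\varepsilon\}$, uses (b) to show every jump lies in $\mathcal K$ a.s.\ (via $\p(X_t^{\mathcal K,E}\neq 0)\leq 1-e^{-t\Pi(E)}=0$ for $E$ disjoint from $\mathcal K$), absorbs the drift $t\gamma_0\in\mathcal K$ separately, and then has to argue that the remaining compensated small-jump part stays in the cone --- which is exactly the delicate limit-identification you flag in your last paragraph, and which the sketch defers to \cite{P-AR-A06}. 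Your dual-cone argument sidesteps that issue entirely: by testing against $\ell\in\mathcal K^*$ you reduce to the classical one-dimensional criterion for a subordinator (Lévy measure on $[0,\infty)$, $\int_0^1 r\,\nu_\ell(dr)<\infty$, nonnegative drift $\ell(\gamma_0)$), all of which follow cleanly from (a)--(c) and the identity $\ell(x)=|\ell(x)|$ $\Pi$-a.e.\ for $\ell\in\mathcal K^*$. The step you worry most about is in fact unproblematic and has nothing to do with reflexivity: since $B$ is separable, $\mathcal K^*\cap \overline{\B}_{B^*}$ is weak$^*$ compact and metrizable, hence weak$^*$ separable; a countable weak$^*$-dense subset $\{\ell_k\}$ satisfies $\mathcal K=\bigcap_k\{\ell_k\geq 0\}$ (if $\ell(x)<0$ for some unit $\ell\in\mathcal K^*$, some $\ell_k$ with $|\ell_k(x)-\ell(x)|<|\ell(x)|/2$ also gives $\ell_k(x)<0$), and a countable intersection of a.s.\ events finishes the argument. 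The only hypothesis you add is closedness of $\mathcal K$, which is indeed part of the definition of a cone in the source \cite{P-AR-A06} and is equally indispensable for the paper's own limiting argument, so nothing is lost. What the paper's approach buys in exchange is constructive path-level information (the explicit decomposition into big jumps, drift, and a compensated martingale), which is what the authors reuse elsewhere; your approach buys a shorter and cleaner proof of the bare containment $X_t\in\mathcal K$.
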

As a corollary from Theorem~\ref{Th10} and the L\'evy-Khinchin representation for \L processes of bounded variation derived in \cite{GS75} the following result is obtained.
\begin{theorem}[P\'erez-Abreu and Rocha-Artega, {\cite[Corollary~13]{P-AR-A06}}]\label{Levy_bounded_variation}
Let $\mathcal K$ be a proper cone of $B.$ Let $\{Z_t : t\geq 0\}$ be a $B$-valued \L process with generating triplet $(A,\nu,\gamma)$ satisfying (a), (b), (c) of Theorem~\ref{Th10} as well as the additional condition $$\int_{D_0}\|x\|\nu(dx)<\infty.$$
Then $\int_{D_0}x\nu(dx)$ is a Bochner integral and the process $Z_t$ is a subordinator of bounded variation.
\end{theorem}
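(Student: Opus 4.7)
The plan is to derive both conclusions from Theorem~\ref{Th10} together with the added integrability hypothesis $\int_{D_0}\|x\|\nu(dx) < \infty$, in three short steps.

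First I would identify the Pettis centering $\mathcal I_\nu$ as a Bochner integral. Since $B$ is separable, the inclusion map $x \mapsto x$ on $D_0$ is strongly $\nu$-measurable; combined with the hypothesis $\int_{D_0}\|x\|\nu(dx) < \infty$ this is exactly the Bochner integrability criterion. Because Bochner integrability implies Pettis integrability with the same vector value, the Bochner integral $\int_{D_0} x\, \nu(dx)$ coincides with the Pettis centering $\mathcal I_\nu$ from condition (c) of Theorem~\ref{Th10}, and that condition now reads $\gamma_0 = \gamma - \int_{D_0} x\, \nu(dx) \in \mathcal K$.

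Second, hypotheses (a), (b), (c) of Theorem~\ref{Th10} hold by assumption, so that theorem directly delivers the subordinator property: $Z_t \in \mathcal K$ almost surely for every $t \geq 0$. It then remains to show bounded variation on compact time intervals. Because $\int_{D_0}\|x\|\nu(dx)$ is finite, the compensator $-i\ell(x)\one_{D_0}(x)$ can be separated in the L\'evy-Khinchin formula, giving
\[
\e\, e^{i\ell(Z_t)} = \exp\!\left(t\Bigl(i\ell(\gamma_0) + \int_{B\setminus\{0\}}(e^{i\ell(x)} - 1)\,\nu(dx)\Bigr)\right).
\]
This is the characteristic functional of the uncompensated representation
\[
Z_t = t\gamma_0 + \int_{(0,t]\times(B\setminus\{0\})} x\, N(ds,dx),
\]
where $N$ is the Poisson random measure associated with $Z_t$ (with intensity $ds \otimes \nu$). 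The drift $t\gamma_0$ trivially has bounded variation, and a Fubini computation gives $\e \sum_{s \leq t}\|\Delta Z_s\| = t\!\int_{B\setminus\{0\}} \|x\|\,\nu(dx) < \infty$, which forces almost-sure absolute summability of jumps on every $[0,t]$ and hence bounded variation of the jump part.

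The main obstacle I anticipate is rigorously justifying the uncompensated L\'evy-It\^o representation above in the Banach space setting. Over $\R^d$ this follows elementarily by summing small jumps once they are integrable in norm, but in a general separable Banach space one must invoke the $B$-valued Poisson random measure construction and the corresponding L\'evy-Khinchin representation for processes of bounded variation from Gikhman-Skorohod~\cite{GS75}, and then appeal to uniqueness in law (determined by the characteristic functional on $B^*$) to identify $Z_t$ with that representation. Once this identification is in place, bounded variation follows immediately from the finite first-moment estimate above.
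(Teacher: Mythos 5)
Your route is essentially the one the paper intends: the paper gives no proof of this statement at all, merely attributing it to P\'erez-Abreu and Rocha-Arteaga and remarking that it follows ``as a corollary from Theorem~\ref{Th10} and the L\'evy-Khinchin representation for L\'evy processes of bounded variation derived in \cite{GS75}''; your three steps (Bochner integrability of the centering via Theorem~\ref{ThBochner} and separability, the cone-valuedness via Theorem~\ref{Th10}, and bounded variation via the uncompensated representation from \cite{GS75}) are exactly a fleshed-out version of that, and the paper's own Example uses the same appeal to Theorem~\ref{ThBochner}.

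One slip to fix in your third step: the hypothesis only gives $\int_{D_0}\|x\|\,\nu(dx)<\infty$, so your identity $\e\sum_{s\leq t}\|\Delta Z_s\| = t\int_{B\setminus\{0\}}\|x\|\,\nu(dx)<\infty$ asserts finiteness of a quantity that need not be finite (the L\'evy measure may well satisfy $\int_{\|x\|>1}\|x\|\,\nu(dx)=\infty$). The conclusion survives by the standard splitting: the jumps with $\|\Delta Z_s\|>1$ on $[0,t]$ are almost surely finitely many (since $\nu(\{\|x\|>1\})<\infty$), hence their norms sum to a finite value a.s.\ without any moment condition, while the small-jump part has finite expected total norm $t\int_{D_0}\|x\|\,\nu(dx)$ and is therefore a.s.\ finite. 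With that repair the argument is complete, modulo the Banach-space L\'evy--It\^o identification you correctly flag as the point where \cite{GS75} must be invoked.
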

Some basic facts concerning the Gelfand-Pettis integral as well as the Bochner integral are included in Appendix A.\sp

We will also need the following
\begin{definition}
Let $\mathcal K$ be a proper cone in a separable Banach space $B.$ A $\mathcal K$-subordinator is said to be {\em a regular subordinator in $\mathcal K$} or {\em $\mathcal K$-valued regular subordinator} $Z_t$ if its L\'evy-Khinchin representation has the special form:
\begin{equation}\label{specialform_of_L_K}
e^{i\ell(Z_t)}=
\exp\left(t\int_\mathcal K\left(e^{i\ell(x)}-1\right)\nu(dx)
+it\ell(\gamma_0)\right),
\end{equation}
for all $\ell\in B^*.$
\end{definition}
\begin{note}Observe that (a), (b) and (c) of Theorem~\ref{Th10} imply that the L\'evy-Khinchin representation is of the special form given by \ref{specialform_of_L_K}.
\end{note}
\section{The j-embedding. Linking Banach spaces and fuzzy vectors}
This standard knowledge section shows how \F may be isometrically embedded into a certain Banach space. This will in turn, by subsequent re-inversion, allow us to apply Theorem~\ref{Th10} in the fuzzy setting of the space \F.\sp

\subsection{Metrics on $\F$}
To begin with let us introduce appropriate metrics on the set $\mathcal F_{\text{cconv}}(\R^d).$  First recall the Hausdorff metric on $C_{\text{cconv}}(\R^d)$:
\begin{definition}
The {\em Hausdorff metric} $d_H$ on the space  $C_{\text{cconv}}(\R^d)$ is given by the following  formula:
\begin{equation*}
d_H(A,B)=\max\left\{\sup_{a\in A}\inf_{b\in B}\|a-b\|_{\ell^2},\;\sup_{b\in B}\inf_{a\in A}\|a-b\|_{\ell^2}\right\},
\end{equation*}
where $A,B\in C_{\text{cconv}}(\R^d)$.
\end{definition}
Based on the Hausdorff metric $d_H$ one can construct $L^p$-metrics, $1\leq p\leq\infty,$ on $\mathcal F_{\text{cconv}}(\R^d)$ via $\alpha$-cuts:\par\vspace{1mm}
For $1\leq p<\infty:$
\begin{equation}\label{dp}
d_p(x^*,y^*)=
\left(\int_0^1d_H\bigl(C_\alpha(x^*),C_\alpha(y^*)\bigr)^p
d\alpha\right)^{1\slash p},
\end{equation}
\indent and for $p=\infty$:
\begin{equation}\label{d8}
d_\infty(x^*,y^*)=\sup_{\alpha\in [0,1]}d_H\bigl(C_\alpha(x^*),C_\alpha(y^*)\bigr),\;x^*,
y^*\in\mathcal F_{\text{cconv}}(\R^d).
\end{equation}

The space $\mathcal F_{\text{cconv}}(\R^d)$ equipped with $d_\infty$ is a complete metric space (but not separable). For $1\leq p<\infty$ the metric space $(\mathcal F_{\text{cconv}}(\R^d),\,d_p)$ is separable but not complete (see \cite{DK90,DK99,Kra02}). In this place we have to emphasise that the lack of completeness is not a problem in our paper as we do not need it in our reasoning.
\begin{note}
If for some other reasons one would need a complete metric space then one can use a completion, see \cite{Kra02,Fen01}.
\end{note}

Now we will define the Banach space $B$ which \F is to be embedded into:
\subsection{The Banach space $B = L^p\bigl((0,1]\times\s^{d-1},\,dxd\lambda\bigr)$}$ $\par

We are looking at the space of real valued, $p$-integrable functions on the product of (0,1] and the $n$-dimensional unit sphere $\s^{d-1}$ (where $\lambda$ is the normalized Lebesgue measure on $\s^{d-1}$):\sp

The space $B = L^p\bigl((0,1]\times\s^{d-1},\,dxd\lambda\bigr)$, (in short $L^p\left((0,1]\times\s^{d-1}\right)$) is equipped with a standard $L^p$-norm (and metric). Namely,
for $1\leq p<\infty$ the norm is given by:
\begin{equation*}
\|f\|_p=\left(\int_0^1\int_{\s^{d-1}}|f(x,u)|^pdx d\lambda(u)\right)^{1\slash p},
\end{equation*}
and the corresponding distance function is
\begin{equation}\label{rhop}
\rho_p(f,g)=\|f-g\|_p \,.
\end{equation}
For $p=\infty,$
$$\|f\|_\infty=\sup_{\alpha\in(0,1]}\sup_{u\in \s^{d-1}}|
f(\alpha,u)|\,,$$
and the corresponding metric is
\begin{equation*}
\rho_\infty(f,g)=\|f-g\|_\infty.
\end{equation*}\sp

The embedding of \F into $B$ is defined as follows:
\begin{theorem}\label{embthm}
Let $1\leq p\leq+\infty$ and
\begin{equation*}
j:\mathcal F_{\mathrm{cconv}}(\R^d)\to L^p\left((0,1]\times \s^{d-1}\right)
\end{equation*}
be defined by
\begin{equation}\label{j}
j(x^*)=s_{x^*}(\cdot,\cdot).
\end{equation}

Then the mapping $j$ is {\em positive linear}\footnote{Positive linearity of $j$ \eqref{al} follows from Lemma~\ref{addsup}.}, i.e., for all non-negative real numbers $\lambda_1,\lambda_2$ we have
\begin{equation}\label{al}
j(\lambda_1\odot x^*\oplus \lambda_2\odot y^*)=\lambda_1 j(x^*)+\lambda_2 j(y^*),\text{ for }\lambda_1,\lambda_2\geq 0.
\end{equation}\noindent

Furthermore the $j$-map is one-to-one onto its image $\mathcal{H} = j\left(\mathcal F_{\mathrm{cconv}}(\R^d)\right)$ which is a closed and convex cone in $L^p\left((0,1]\times \s^{d-1}\right).$
Moreover, for all $1\leq p\leq\infty,$ the mapping $j$ is an isometry,
\begin{equation}\label{izom}
d_p(x^*,y^*)=\rho_p(j(x^*),j(y^*)).
\end{equation}
\end{theorem}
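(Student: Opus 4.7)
The proof breaks into four pieces: positive linearity of $j$, injectivity, that the image $\mathcal H$ is a closed convex cone, and the isometry \eqref{izom}. Positive linearity is immediate from Lemma~\ref{addsup}: for $\lambda_1, \lambda_2 \geq 0$, equations \eqref{additive} and \eqref{pseudolin} collapse to $s_{\lambda_1 \odot x^* \oplus \lambda_2 \odot y^*} = \lambda_1 s_{x^*} + \lambda_2 s_{y^*}$, which is precisely \eqref{al}; convexity of $\mathcal H$ and its closure under multiplication by positive scalars then follow formally.

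For injectivity I would invoke the classical Minkowski recovery formula $C = \bigcap_{u \in \s^{d-1}} \{x : \langle u, x\rangle \leq s_C(u)\}$, valid for every non-empty compact convex $C \subset \R^d$. This lets us read off each $\alpha$-cut from $s_{x^*}(\alpha, \cdot)$, and with the $\alpha$-cuts in hand the reconstruction $\xi_{x^*}(x) = \sup\{\alpha \in (0,1] : x \in C_\alpha(x^*)\}$ recovers the characterizing function, so $j(x^*) = j(y^*)$ forces $x^* = y^*$. The isometry rests on the classical identity $d_H(A, B) = \sup_{u \in \s^{d-1}} |s_A(u) - s_B(u)|$ for compact convex $A, B$; applied to $A = C_\alpha(x^*)$, $B = C_\alpha(y^*)$, it yields \eqref{izom} for $p = \infty$ directly and for $1 \leq p < \infty$ after raising to the $p$-th power and integrating against $d\alpha$, once the definitions \eqref{dp} and \eqref{rhop} are matched.

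The closedness of $\mathcal H$ in $L^p((0,1] \times \s^{d-1})$ is the main obstacle. Given $j(x^*_n) \to f$ in $L^p$, I would extract a subsequence converging almost everywhere. The defining properties of a support function listed after Definition~\ref{sfff}---positive homogeneity and sub-additivity in $u$, together with left-continuity and monotonicity in $\alpha$---are closed conditions that pass to almost-everywhere limits, so after modifying $f$ on a null set it satisfies (i)--(iv) everywhere. Classical convex analysis then produces, for each fixed $\alpha$, a unique non-empty compact convex $C_\alpha \subset \R^d$ with $f(\alpha, \cdot) = s_{C_\alpha}(\cdot)$, and the monotonicity and left-continuity in $\alpha$ ensure that the family $\{C_\alpha\}$ is the $\alpha$-cut family of the candidate $\xi(x) = \sup\{\alpha \in (0, 1] : x \in C_\alpha\}$. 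The genuine difficulty is to show that $\supp \xi$ is bounded: one must combine the $L^p$-integrability of $f$ with the positive homogeneity in $u$ to produce a uniform radius bound on the $C_\alpha$, so that $\xi \in \F$ and $j(\xi) = f$. This boundedness step is where the proof genuinely uses the structure of the ambient Banach space and is where most care is required.
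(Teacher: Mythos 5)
The paper supplies no argument for this theorem at all --- its ``proof'' is the citation to \cite[p.~158]{Vie11} --- so any honest write-up is necessarily doing more than the paper does. Your treatment of positive linearity (via Lemma~\ref{addsup}) and of injectivity (Minkowski recovery of a compact convex set from its support function, then reconstruction of $\xi_{x^*}$ from the $\alpha$-cuts) is correct and standard. The isometry step, however, does not close the way you suggest. H\"ormander's identity $d_H(A,B)=\sup_{u\in\s^{d-1}}|s_A(u)-s_B(u)|$ turns \eqref{dp} into $\bigl(\int_0^1\sup_{u}|s_{x^*}(\alpha,u)-s_{y^*}(\alpha,u)|^p\,d\alpha\bigr)^{1/p}$, an $L^p$-in-$\alpha$ norm of the \emph{supremum} in $u$, whereas $\rho_p$ in \eqref{rhop} integrates the $p$-th power over $\s^{d-1}$ as well. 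These are not equal: one only gets $\rho_p(j(x^*),j(y^*))\le d_p(x^*,y^*)$, and already for two crisp vectors $x^*=\one_{\{a\}}$, $y^*=\one_{\{b\}}$ with $d>1$ the inequality is strict, since $\int_{\s^{d-1}}|\langle u,a-b\rangle|^p\,d\lambda(u)<\|a-b\|_{\ell^2}^p$. So ``once the definitions are matched'' is not a formality; it is the point at which your argument proves an inequality rather than \eqref{izom}. (The identity does hold verbatim for $p=\infty$; for finite $p$ one must either define $d_p$ through the $L^p$-average of the support-function difference over the sphere, or replace $\rho_p$ by the corresponding mixed norm.)

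The closedness step is not merely delicate --- for $1\le p<\infty$ it cannot be completed, and the obstruction is exactly the support-boundedness you flag. Since $L^p\bigl((0,1]\times\s^{d-1}\bigr)$ is complete and $j$ is claimed to be an isometric bijection onto $\mathcal H$, closedness of $\mathcal H$ would force $(\F,d_p)$ to be complete; but the paper itself records in Section~3 (citing \cite{DK90,DK99,Kra02}) that $(\F,d_p)$ is \emph{not} complete for $1\le p<\infty$. A concrete witness: take $C_\alpha(x_n^*)$ to be the segment from $0$ to $\min\bigl(n,\alpha^{-1/(2p)}\bigr)v$ for a fixed unit vector $v$. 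Each $x_n^*$ lies in $\F$, and $s_{x_n^*}(\alpha,u)=\min\bigl(n,\alpha^{-1/(2p)}\bigr)\max(\langle u,v\rangle,0)$ converges in $L^p$ to $f(\alpha,u)=\alpha^{-1/(2p)}\max(\langle u,v\rangle,0)$, whose ``$\alpha$-cuts'' have unbounded union; $f$ violates condition 2) of Definition~\ref{defxi} and lies in $\overline{\mathcal H}\setminus\mathcal H$. So no uniform radius bound can be extracted from $L^p$-integrability plus homogeneity, and the closedness assertion for finite $p$ is inconsistent with the paper's own remarks; at best one can prove closedness for $p=\infty$, or closedness of $j\bigl(\mathcal F^K_{\mathrm{cconv}}(\R^d)\bigr)$ under additional uniform-support restrictions.
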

\begin{proof}
For the proof see~\cite[p. 158]{Vie11} and the literature cited therein.
\end{proof}

\section{Construction of a $K$-positive fuzzy L\'evy process}
\begin{note}
Because we need separability of the metric space $(\F,d_p)$ we will henceforth assume that $p\in[1,\infty).$
\end{note}
The following lemma is immediate:

\begin{lemma}\label{NP}
The cone $\mathcal H=j\left(\mathcal F_{\mathrm{cconv}}(\R^d)\right)$ is not proper.
\end{lemma}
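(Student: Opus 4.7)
The plan is to exhibit a concrete non-zero element $f \in \mathcal H$ such that $-f \in \mathcal H$ as well, which by definition shows that $\mathcal H$ fails to be proper. The natural candidates are support functions of crisp (singleton) fuzzy vectors, since for a singleton all the nonlinearity coming from $\alpha$-cuts disappears and the support function becomes linear in $u$, hence its negative is again a support function.

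More precisely, I would fix any $x_0 \in \R^d$ with $x_0 \neq 0$, consider the fuzzy vector $x_0^* \in \F$ whose characterizing function is $\one_{\{x_0\}}$ (trivially satisfying Definition~\ref{defxi}), and observe that for every $\alpha \in (0,1]$ we have $C_\alpha(x_0^*) = \{x_0\}$, whence
\begin{equation*}
j(x_0^*)(\alpha,u) = s_{x_0^*}(\alpha,u) = \sup_{a \in \{x_0\}} \langle u,a \rangle = \langle u, x_0 \rangle.
\end{equation*}
Applying the same computation to the crisp fuzzy vector $(-x_0)^*$ associated with $\one_{\{-x_0\}}$ yields
\begin{equation*}
j((-x_0)^*)(\alpha,u) = \langle u, -x_0 \rangle = -\langle u, x_0 \rangle = -j(x_0^*)(\alpha,u).
\end{equation*}
Thus both $f := j(x_0^*)$ and $-f = j((-x_0)^*)$ lie in $\mathcal H$. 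Since $x_0 \neq 0$, the function $f$ is not identically zero in $L^p\bigl((0,1]\times \s^{d-1}\bigr)$, so the defining condition of properness fails and $\mathcal H$ is not proper.

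I do not expect any real obstacle here; the whole point of the lemma is to motivate passing from $\F$ (and $\mathcal H$) to the subclass $\FK$ (and $\mathcal K$) cut out by support conditions in a proper convex cone $K \subset \R^d$, so that crisp vectors pointing in forbidden directions are excluded and the pathology above disappears. The only thing to double-check is that the indicator of a singleton really does belong to $\F$ under Definition~\ref{defxi}, which is clear since $\{x_0\}$ is compact, convex and nonempty, and that the isometric embedding $j$ from Theorem~\ref{embthm} does send it to the linear function $\langle u, x_0\rangle$.
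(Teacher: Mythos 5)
Your proof is correct and takes essentially the same route as the paper: the paper also picks a nonzero crisp vector $x$ and its negative $-x$, views them as fuzzy vectors via their (delta/indicator) characterizing functions, and notes that $j$ sends them to $\langle x,u\rangle$ and $-\langle x,u\rangle$, both in $\mathcal H$ but nonzero. Your write-up just spells out the singleton $\alpha$-cut computation slightly more explicitly.
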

\begin{proof}
Take real vectors $x\not=0$ and $-x$ in $\R^d$. Both of them can be considered $d$-dimensional fuzzy vectors with
characterizing functions $\delta_x$ and $\delta_{-x}.$
Clearly,
\begin{align*}
j(x)=&s_x(\alpha,u)=\langle x,u\rangle \intertext{and}
j(-x)=&s_{-x}(\alpha,u)=-\langle x,u\rangle.
\end{align*}
Thus $\langle x,u\rangle$ and $-\langle x,u\rangle$ belong to $\mathcal H$ but $\langle x,u\rangle\not=0$ for $x\not=0.$
\end{proof}

It is clear from the preceding Lemma, that the subordinator we are looking to construct cannot possibly be based in $\mathcal{H}.$\par
Instead let $K\subset\R^d$ be any fixed proper convex cone. By $\FK$ we denote the set of all $d$-dimensional fuzzy vectors such that the supports of their characterizing functions are contained in $K,$ i.e.,
\begin{equation*}
\mathcal F_{\text{cconv}}^K(\R^d)=\{x^*\in\mathcal F_{\text{cconv}}(\R^d):\supp\xi_{x^*}\subset K\}.
\end{equation*}
\begin{lemma}\label{FK}
Let $K$ be a proper convex cone in $\R^d$. Then $\FK$ is a proper convex cone in $\F$
\end{lemma}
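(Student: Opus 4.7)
The plan is to verify directly that $\FK$ satisfies the three required properties: closure under $\oplus$, closure under positive $\odot$-multiplication, and properness. All three will be reduced, via the $\alpha$-cut characterization of the fuzzy operations, to the corresponding properties of the convex cone $K\subset\R^d$.

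First I would observe that, since $\supp\xi_{x^*}=C_0(x^*)$, the containment $\supp\xi_{x^*}\subseteq K$ is equivalent to $C_\alpha(x^*)\subseteq K$ for all $\alpha\in[0,1]$ (using that the $\alpha$-cuts are nested and $K$ is closed). For closure under $\oplus$, take $x^*,y^*\in\FK$; then for each $\alpha\in(0,1]$,
\begin{equation*}
C_\alpha(x^*\oplus y^*)=C_\alpha(x^*)+C_\alpha(y^*)\subseteq K+K\subseteq K,
\end{equation*}
since $K$ is closed under addition. For closure under positive scalar multiplication, if $\lambda>0$ and $x^*\in\FK$, then $C_\alpha(\lambda\odot x^*)=\lambda\cdot C_\alpha(x^*)\subseteq \lambda K\subseteq K$. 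Convexity of $\FK$ then follows automatically from the two cone properties in the usual way: for $x^*,y^*\in\FK$ and $t\in[0,1]$, the element $t\odot x^*\oplus(1-t)\odot y^*$ lies in $\FK$.

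The step that needs a little more care is properness. Suppose both $x^*$ and $(-1)\odot x^*$ lie in $\FK$. For each $\alpha\in(0,1]$ we have
\begin{equation*}
C_\alpha((-1)\odot x^*)=(-1)\cdot C_\alpha(x^*)=-C_\alpha(x^*),
\end{equation*}
so $-C_\alpha(x^*)\subseteq K$ while also $C_\alpha(x^*)\subseteq K$. Pick any $a\in C_\alpha(x^*)$; then $a\in K$ and $-a\in K$, and properness of the cone $K$ forces $a=0$. Hence $C_\alpha(x^*)=\{0\}$ for every $\alpha\in(0,1]$, which means $\xi_{x^*}(0)=1$ and $\xi_{x^*}(x)=0$ for $x\neq0$, i.e. $x^*=\one_{\{0\}}$, the additive identity of $(\F,\oplus)$. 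This is precisely the required ``$x^*=0$'' conclusion in the cone sense.

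No step poses a genuine obstacle; the only point that could trip the reader up is the interpretation of the ``zero'' of the cone and of ``$-x^*$'' in the fuzzy setting, which I would spell out explicitly (as $\one_{\{0\}}$ and $(-1)\odot x^*$, respectively) before invoking the definition of a proper cone. Everything else is a direct translation between properties of the cone $K\subset\R^d$ and properties of the induced set $\FK\subset\F$ via the $\alpha$-cut representation of $\oplus$ and $\odot$.
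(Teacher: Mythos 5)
Your proof is correct and follows essentially the same route as the paper's: both reduce closure under $\oplus$ and positive $\odot$-multiplication, and the properness, to the corresponding properties of the cone $K$ in $\R^d$; the paper argues directly with $\supp\xi_{x^*}$ (noting $\supp\xi_{-x^*}=-\supp\xi_{x^*}\subset K$ forces $\supp\xi_{x^*}=\{0\}$), while you phrase the same reduction through the $\alpha$-cuts. Your version is merely more explicit about the identification of the fuzzy zero with $\one_{\{0\}}$, which the paper leaves implicit.
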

\begin{proof}
To see that $\FK$ is a convex cone is straightforward. Thus we only need to show that it is proper.

Clearly, if $x^*$ and $y^*$ are elements of $\FK$ then $\supp\xi_{x^*\oplus y^*}\subset K.$ Similarly, if $\lambda\geq 0$ and $x^*\in\FK$, then $\supp\xi_{\lambda\otimes x^*}\subset K.$ To show that $\FK$ is proper suppose that $x^*$ and $-x^*:=(-1)\odot x^*$ belong to $\FK.$ Then $\supp\xi_{x^*}$ and $\supp\xi_{-x^*}=-\supp\xi_{x^*}$ are subsets of $K$ which is a proper cone. Thus $\supp\xi_{x^*}=\{0\}.$ This implies that $x^*=0$.
\end{proof}
\begin{note}
Note that the proper convex cone \FK induces a partial order  $\leq_{\mathcal K}$ on $\mathcal F_{\text{cconv}}(\R^d)$ by defining
\begin{equation}
\begin{split}
{x^*} \leq_{\mathcal K} {y^*}\, &:\Leftrightarrow \, j^{-1}\bigl(j({y^*}) - j({x^*})\bigr) \in\FK\\
&\Leftrightarrow j({y^*}) - j({x^*})\in\mathcal K\\
&\Leftrightarrow j(x^*)\leq_\mathcal Kj(y^*),
\end{split}
\end{equation}
and elements $x^*$ belonging to the cone $\FK$ are called \emph{K-positive}, because naturally $0 \leq_{\mathcal K}x^*.$ - Every element $x^*$ which belongs to $\FK$ satisfies $\supp \xi_{x^*}\subset K.$ \par This is in full analogy to the $1$-dimensional case, where very often a fuzzy number $x^*\in\mathcal F_{\text{cconv}}(\R)$ with $\supp \xi_{x^*}\subset(0,+\infty)$ is called positive. Here the cone $K$ would be the positive half-line, i.e., $K=(0,+\infty).$
\end{note}
\begin{lemma}\label{calJ}
Let $\mathcal{K} = j\left(\mathcal F_{\mathrm{cconv}}^K(\R^d)\right).$ Then $\mathcal K\subset\mathcal H = j\bigl(\F\bigr)$ and $\mathcal{K}$ is a proper convex cone in the Banach space $L^p((0,1]\times\s^{d-1}).$
\end{lemma}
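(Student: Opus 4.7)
The plan falls naturally into three pieces, following the definition: inclusion, cone/convexity, and properness.

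First, the inclusion $\mathcal K\subset\mathcal H$ is immediate since $\FK\subseteq\F$ and $j$ applied to a subset gives a subset of the image.

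Second, to show $\mathcal K$ is a convex cone I would invoke the positive linearity of $j$ from \eqref{al} together with Lemma~\ref{FK}. Concretely, given $f_1,f_2\in\mathcal K$, write $f_i=j(x_i^*)$ with $x_i^*\in\FK$, and for $\lambda_1,\lambda_2\geq 0$ compute
\begin{equation*}
\lambda_1 f_1+\lambda_2 f_2=\lambda_1 j(x_1^*)+\lambda_2 j(x_2^*)=j\bigl(\lambda_1\odot x_1^*\oplus \lambda_2\odot x_2^*\bigr),
\end{equation*}
and since $\FK$ is a convex cone, $\lambda_1\odot x_1^*\oplus\lambda_2\odot x_2^*\in\FK$, giving $\lambda_1 f_1+\lambda_2 f_2\in\mathcal K$.

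Third, for properness suppose $f\in\mathcal K$ and $-f\in\mathcal K$. Pick representatives $f=j(x^*)$ and $-f=j(y^*)$ with $x^*,y^*\in\FK$. Then $j(x^*)+j(y^*)=0$ in $L^p$, and by positive linearity of $j$ again this reads $j(x^*\oplus y^*)=0$. Since $j$ is one-to-one by Theorem~\ref{embthm}, we obtain $x^*\oplus y^*=\one_{\{0\}}$; rewriting via $\alpha$-cuts,
\begin{equation*}
C_\alpha(x^*)+C_\alpha(y^*)=\{0\}\quad\text{for every }\alpha\in(0,1].
\end{equation*}
The key small observation is that a Minkowski sum of two nonempty compact convex sets in $\R^d$ can equal $\{0\}$ only when both summands are singletons $\{a\}$ and $\{-a\}$; here this forces $C_\alpha(x^*)=\{a_\alpha\}$ with $a_\alpha\in K$ (since $x^*\in\FK$) and $-a_\alpha\in K$ (since $y^*\in\FK$), and properness of $K$ then gives $a_\alpha=0$. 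Hence $C_\alpha(x^*)=\{0\}$ for all $\alpha$, so $x^*=\one_{\{0\}}$ and $f=j(x^*)=0$ in $L^p$, as required.

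I do not anticipate a serious obstacle: the proof is essentially a transfer of Lemma~\ref{FK} through the positive-linear isometric embedding $j$, with the only subtle point being the elementary lemma that $A+B=\{0\}$ for compact convex $A,B$ forces $A$ and $B$ to be antipodal singletons, which is where the properness of $K$ gets converted into the properness of $\mathcal K$.
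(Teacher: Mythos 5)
Your proposal is correct and follows the same overall architecture as the paper's proof: the inclusion is read off from the definition of $j$, the cone property is transferred from Lemma~\ref{FK} through the positive linearity \eqref{al}, and properness is attacked by taking $f,-f\in\mathcal K$ with preimages $x^*,y^*\in\FK$ and deducing $j(x^*\oplus y^*)=0$, hence $x^*\oplus y^*=\one_{\{0\}}$ by injectivity of $j$.

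The one place where you genuinely diverge is the last step, and your version is the more careful one. The paper concludes from $x^*\oplus y^*=0$ with $x^*,y^*\in\FK$ that $x^*=y^*=0$ by citing the properness of $\FK$ (Lemma~\ref{FK}). Strictly speaking, properness of a cone only says that $x^*\in\FK$ and $(-1)\odot x^*\in\FK$ force $x^*=0$; to invoke it one must first know that $y^*=(-1)\odot x^*$, which is not automatic since $\oplus$ has no inverses in the fuzzy setting. Your Minkowski-sum observation --- that $C_\alpha(x^*)+C_\alpha(y^*)=\{0\}$ for nonempty compact convex sets forces both $\alpha$-cuts to be antipodal singletons $\{a_\alpha\}$ and $\{-a_\alpha\}$ --- is exactly the missing bridge; from there you feed $a_\alpha\in K$ and $-a_\alpha\in K$ into the properness of the underlying cone $K$ rather than of $\FK$. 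So your route buys a fully rigorous properness step at the cost of one elementary lemma, while the paper's shortcut through Lemma~\ref{FK} is slicker but leaves that small gap implicit (indeed, your singleton argument is essentially what is needed to justify the paper's own step, and is also the content hiding inside the proof of Lemma~\ref{FK} itself). No corrections needed.
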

\begin{proof}
By the definition \eqref{j} of the embedding $j$, $\mathcal{K}\subset \mathcal H.$ Let $f,g\in \mathcal{K}.$ Then there are vector characterizing functions $\xi_{x^*}$ and ${\xi_y^*}$ with supports in $K$ such that $j({x^*})=f$ and $j({y^*})=g.$ Then, by \eqref{al}, for every real $\lambda>0,$ $\lambda f=j(\lambda{x^*}).$ By Lemma~\ref{FK}, $\lambda{x^*}\in\FK.$ Hence $\lambda f\in \mathcal{K}.$ Similarly, by \eqref{al}, $f+g=j({x^*})+j({y^*})=j({x^*\oplus y^*}).$ By Lemma~\ref{FK}, $\xi_{x^*\oplus y^*}\in\FK.$ Thus $\mathcal{K}$ is a convex cone.
Now we prove that $\mathcal{K}$ is proper. Let $f,-f\in \mathcal{K}.$ Then there are $x^*,y^*\in\FK$ such that $j(x^*)=f$ and $j(y^*)=-f.$ Then $j(x^*\oplus y^*)=f-f=0.$ Therefore $\xi_{x^*\oplus y^*}=\xi_{\{0\}}$ and consequently $x^*\oplus y^*=0.$ By Lemma~\ref{FK}, $\FK$ is a proper cone, thus the last equation has only the trivial solution $x^*=0$ and $y^*=0.$ This implies that $f=j(x^*)=0$ and $-f=j(y^*)=0$.
\end{proof}

To sum up and make things clearer, the following diagram illustrates the interrelation of the spaces involved:

\begin{diagram}
               &        &                            &                  &  B = L^p\left((0,1]\times \s^{d-1}\right)\\
               &        &                            &                  &     \cup                                  \\
  \mathbb{R}^d &  \rTo  &  \mathcal{F}_{\text{cconv}}(\R^d)    &   \rTo^{\qquad \qquad j}     &  \mathcal{H}                                       \\
   \cup        &        &       \cup                 &                  &     \cup                                 \\
    $K$        &  \rTo  &  \mathcal{F}_{\text{cconv}}^K(\R^d)  &    \rTo^{\qquad \qquad j}         &  \mathcal{K}                                      \\
               &        &         \cup                &                  &      \cup                                      \\
               &        &       {X^K_t}^*       &  \lTo^{\qquad \qquad j^{-1}} &  X^{\mathcal{K}}_t   \\
               &&&&
\end{diagram}

The above diagram already incorporates the processes we are about to define.
\subsubsection{Construction of a $K$-positive fuzzy L\'evy process}
The first step is to construct a Banach space valued L\'evy process $X_t^{\mathcal{K}}$ based in the proper convex cone $\mathcal K\subset\mathcal H \subset B.$ Note that, according to Lemma~\ref{NP}, the cone $\mathcal H$ is not proper so we can not simply use it in our construction. However, by Lemma~\ref{calJ} its sub-cone $\mathcal{K}$ is proper alright and as such meets the requirements of Theorem~\ref{Th10}. Based on Theorem~\ref{Th10}, a $\mathcal{K}$-valued L\'evy process $X_t^{\mathcal{K}}$ can therefore be constructed\footnote{
In Appendix B we give a brief sketch of the of the proof of Theorem~\ref{Th10} for the reader's convenience.}.\sp

The second step is to apply $j^{-1},$ the inverse of $j,$ to the process $X_t^{\mathcal{K}}$ in order to get a fuzzy L\'evy process ${X^K_t}^* = j^{-1}(X_t^{\mathcal{K}})$ taking values in the fuzzy vector cone $\FK$.\sp

In our terminology the process ${X^K_t}^* = j^{-1}(X_t^{\mathcal{K}})$ is $K$-positive.\sp

The final step is to show that the process ${X^K_t}^*$ constructed as described satisfies properties (i)-(iv) of Definition~\ref{B_process}, or rather their natural fuzzy analogues, which allows us to call the constructed process a \L process.
\begin{note} In (ii) we introduce the concept of metric
increments of the process.
\end{note}
\begin{theorem}
The process ${X^K_t}^*$ constructed above has the following properties.
\begin{itemize}
\item[(i)] The process ${X^K_t}^*$ starts from zero, i.e., ${X^K_0}^*=0$ a.s..
\item[(ii)] ${X^K_t}^*$ has independent and stationary increments of the form $$d_p\left(X^K_t,X^K_s\right),$$
for every $t>s\geq 0$ and all $1\leq p<+\infty.$
\item[(iii)] ${X^K_t}^*$ is stochastically continuous with respect to the metric $d_p$ for every $1\leq p<\infty,$ i.e., for every $\varepsilon>0$ and every $p,$
    \begin{equation*}\lim_{s\to t}\p\left(d_p\left({X^K_t}^*,
    {X^K_s}^*\right)>\varepsilon\right)=0.
    \end{equation*}
\item[(iv)] The trajectories of the process ${X^K_t}^*$ are a.s. c\`adl\`ag, i.e., are right-continuous and have left-limits with respect to the metric $d_p$ for all $1\leq p<+\infty.$
\end{itemize}
\end{theorem}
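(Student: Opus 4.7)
The overarching strategy is to transport each of the four properties from the Banach space L\'evy process $X_t^{\mathcal{K}}$ to its fuzzy counterpart ${X_t^K}^{*} = j^{-1}(X_t^{\mathcal{K}})$ by means of the isometric embedding $j$ of Theorem~\ref{embthm}. Since $j$ restricts to an isometric bijection between $(\FK, d_p)$ and $(\mathcal{K}, \rho_p)$ by Lemma~\ref{calJ}, and since $X_t^{\mathcal{K}}$ satisfies all four conditions of Definition~\ref{B_process} in $B$ by the construction via Theorem~\ref{Th10}, the entire proof will reduce to checking that these conditions are preserved under the isometric $j^{-1}$.

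For (i), one observes that $X_0^{\mathcal{K}} = 0$ a.s.\ in $B$ and that $j$ maps the degenerate fuzzy vector $0^{*}$ (characterizing function $\delta_{\{0\}}$) to the zero element of $L^p\bigl((0,1]\times \s^{d-1}\bigr)$; injectivity of $j$ then yields ${X_0^K}^{*} = 0^{*}$ a.s. For (iv), the c\`adl\`ag property is purely topological: right-continuity and existence of left-limits of $X_t^{\mathcal{K}}$ with respect to $\rho_p$ transfer verbatim to ${X_t^K}^{*}$ with respect to $d_p$, because $j^{-1}$ is an isometric (hence continuous) bijection between $\mathcal{K}$ and $\FK$ and therefore commutes with all sequential limits in the relevant metrics.

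For (ii) and (iii), the workhorse is the identity
\begin{equation*}
d_p\bigl({X_t^K}^{*},{X_s^K}^{*}\bigr) \;=\; \rho_p\bigl(X_t^{\mathcal{K}},X_s^{\mathcal{K}}\bigr) \;=\; \bigl\|X_t^{\mathcal{K}} - X_s^{\mathcal{K}}\bigr\|_p,
\end{equation*}
which is a direct consequence of the isometry \eqref{izom}. Property (iii) follows at once: $\p\bigl(d_p({X_t^K}^{*},{X_s^K}^{*}) > \varepsilon\bigr) = \p\bigl(\|X_t^{\mathcal{K}} - X_s^{\mathcal{K}}\|_p > \varepsilon\bigr) \to 0$ as $s \to t$ by the stochastic continuity of $X_t^{\mathcal{K}}$. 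For (ii), stationarity of the real-valued random variable $\|X_t^{\mathcal{K}} - X_s^{\mathcal{K}}\|_p$ as a function of $t-s$ is inherited from the stationarity of the Banach-space increments $X_t^{\mathcal{K}} - X_s^{\mathcal{K}}$ (the norm is a deterministic function of the increment, so its law depends only on $t-s$). Independence of the metric increments on disjoint intervals follows in the same way: for any partition $0 \leq t_0 < t_1 < \cdots < t_n$, the random variables $\|X_{t_k}^{\mathcal{K}} - X_{t_{k-1}}^{\mathcal{K}}\|_p$ are continuous functions of the independent Banach-space increments $X_{t_k}^{\mathcal{K}} - X_{t_{k-1}}^{\mathcal{K}}$, hence themselves independent.

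The conceptual obstacle, and the reason property (ii) must be phrased through the metric rather than through an algebraic difference, is exactly the failure of $\FK$ to be a group under $\oplus$: Minkowski subtraction does not close on $\FK$, and $j$ is only \emph{positively} linear (cf.\ the footnote to Theorem~\ref{embthm}), so the naive expression $j^{-1}\bigl(j(X_t^{\mathcal{K}}) - j(X_s^{\mathcal{K}})\bigr)$ generally lies outside $\FK$. Once one accepts the substitution of honest increments by their $d_p$-distances — which, thanks to the isometry, carry the full information about the norm of the Banach-space increment — all four verifications collapse to short, essentially one-line reductions to the already-established properties of $X_t^{\mathcal{K}}$.
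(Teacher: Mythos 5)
Your proposal is correct and follows essentially the same route as the paper's own proof: all four properties are transported from the Banach-space process $X_t^{\mathcal K}$ to ${X^K_t}^*$ via the isometry \eqref{izom}, with (i) reducing to $j^{-1}(0)=0$, (ii) and (iii) to the identity $d_p\bigl({X^K_t}^*,{X^K_s}^*\bigr)=\bigl\|X_t^{\mathcal K}-X_s^{\mathcal K}\bigr\|_p$ together with the fact that measurable functions of independent increments remain independent, and (iv) to the continuity of $j^{-1}$. Your closing remark about why the increments must be phrased metrically (failure of $\oplus$-subtraction and mere positive linearity of $j$) matches the motivation the paper gives elsewhere for this formulation.
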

\begin{proof}
(i) This is clear. It follows from the fact that the \L process $X_t^{\mathcal K}$ may be assumed to start from zero (see Definition~\ref{B_process}) and that $j^{-1}(0)=0.$

(ii) Let $E$ be a set from $\mathcal B,$ the $\sigma$-field of Borel subsets of $\mathcal{F}_{\text{cconv}}^K(\R^d).$ We have by \eqref{izom},
\begin{equation*}
\p\left(d_p\left({X^K_t}^*,{X^K_s}^*\right)\in E\right)=\p\left(\left\|X_t^{\mathcal K}-X_s^{\mathcal K}\right\|_p\in j(E)\right).
\end{equation*}
Thus the process ${X^K_t}^*$ has stationary increments $d_p\left({X^K_t}^*,{X^K_s}^*\right).$ Independence follows from the above equality of distributions and the fact that the independence of increments $X_t^{\mathcal K}-X_s^{\mathcal K}$ of the \L process implies (by definition) independence of the random variables $\|X_t^{\mathcal K}-X_s^{\mathcal K}\|$ and $\|X_v^{\mathcal K}-X_u^{\mathcal K}\|.$
\par
(iii) It is enough to note that since $j$ is an isometry (see \eqref{izom} of Theorem~\ref{embthm}) we have
\begin{equation*}
\begin{split}
\lim_{s\to t}\p\left(d_p\left({X^K_t}^*,{X^K_s}^*\right)>\varepsilon\right)
=&\lim_{s\to t}\p\left(\left\|j({X^K_t}^*)-
j({X^K_s})^*\right\|_p>\varepsilon\right)\\
=&\lim_{s\to t}\p\left(\left\|X^{\mathcal K}_t-
X^{\mathcal K}_s\right\|_p>\varepsilon\right).
\end{split}
\end{equation*}
The right hand side tends to $0$ a.e. as $s\to t$ by property (iii) of Definition~\ref{B_process} of a \L process.
\par
(iv) Since $j$ is an isometry (see \eqref{izom}) we have
\begin{multline*}
\lim_{s\to t+}d_p\left({X^K_t}^*(\omega),{X^K_s}^*(\omega)\right)\\=
\lim_{s\to t+}\left\|j({X^K_t}^*(\omega))-j({X^K_s}^*(\omega))\right\|_p\\
=\lim_{s\to t+}\left\|X^{\mathcal K}_t(\omega)-X^{\mathcal K}_s(\omega)\right\|_p=0
\end{multline*}
since $X^{\mathcal K}_t$ is a.s. right-continuous. Thus
a.s.
\begin{equation*}
\lim_{s\to t+}d_p\left({X^K_t}^*(\omega),{X^K_s}^*(\omega)\right)=0,
\end{equation*}
i.e., the trajectories of the process ${X^K_t}^*$ are right-continuous a.s. Since $X^{\mathcal K}_t(\omega)$ has also left-limits a.s. with respect to the $L^p$ norm therefore for every $t>0$ there exists $k_\omega\in\mathcal K$ such that a.s.
\begin{equation*}
\lim_{s\to t-}\left\|X^{\mathcal K}_s(\omega)-k_\omega\right\|_p=0.
\end{equation*}
Thus by the isometry of $j,$
\begin{multline*}
\lim_{s\to t-}\left\|X^{\mathcal K}_s(\omega)-k_\omega\right\|_p=\lim_{s\to t-}d_p\left(j^{-1}(X^{\mathcal K}_s(\omega)),j^{-1}(k_\omega)\right)\\=\lim_{s\to t-}d_p\left({X^K_t}^*(\omega),j^{-1}(k_\omega)\right)=0.
\end{multline*}
Clearly $j^{-1}(k_\omega)\in\mathcal{F}^K_{\text{cconv}}(\R^d)$ and the proof is finished.
\end{proof}
\begin{example}[{\cite[Example 23]{P-AR-A06}} adapted to our setting]
As we mentioned above (Theorem~\ref{inf-div}) one can identify each infinitely divisible law $\mu$ on a Banach space $B$ with one $B$-valued \L process (Theorem~\ref{inf-div}).\sp

Take a measure $\mu$ on the Banach space $B=L^p\left((0,1]\times\s^{d-1}\right),$ which is infinitely divisible and its Fourier transform (or the characteristic function in the language of probabilists) is of the form
\begin{equation*}
(\hat\mu,\ell)=
\exp\left(c_{\alpha}^{-1}t\int_0^\infty\int_{\partial\B}
\left(e^{ir\ell(y)}-ir\ell(y)\one_\B(ry)
+it\ell(\gamma)\right)
\dfrac{\lambda(dy)}{r^{1+\alpha}}dr \right),
\end{equation*}
where $\ell\in B^*,$ $\gamma\in B,$ the constant $c_\alpha$ depends only on $\alpha,$ and $\B$ is the unit closed ball in $B,$ i.e., $\B=\{x\in B:\|x\|\leq 1\}.$ Theorem~\ref{inf-div} guarantees the existence of an $\alpha$-stable \L process $L_t$ with $\alpha$-stable law $\mu.$

Having $L_t$ we want to first construct a $\mathcal{K}$-valued subordinator $L^{\mathcal{K}}_t$ and then from there the fuzzy process ${L^K_t}^*$  based in the fuzzy cone $\FK.$ \par
As before $\mathcal K = j(\FK)$ for some $K\in\R^d.$\sp

Let $\alpha\in(0,1).$ We assume that $\lambda$ is concentrated on $\B_{\mathcal K}=\partial\B\cap\mathcal K=\{x\in\mathcal K:\|x\|=1\}.$ Let
$$\nu(C)=c_{\alpha}^{-1}\int_0^\infty\int_{\B_{\mathcal K}}\one_{C}(ry)\lambda(dy)\dfrac{dr}{r^{1+\alpha}},$$
where $C\in B_0.$ Clearly $\nu$ is concentrated on $\mathcal K.$

Now we check that properties (a), (b) and (c) assumed in Theorem~\ref{Th10} are satisfied in this example.

Condition (a) is clearly satisfied since $(\hat\mu,\ell)$ does not have a Brownian component.

The measure $\nu$ is defined in such a way that it is supported on $\mathcal K.$ So (b) is satisfied.

Considering condition (c) first we have to check that there exists a $\nu$-Pettis centering $\mathcal I_\nu=\int_{D_0}x\nu(dx)$ such that $\gamma_0:=\gamma-\mathcal I_\nu\in\mathcal K$ and $D_0=\{x:0 <\|x\|\leq 1\}.$

Notice that we have ($\|y\|=1$):
\begin{equation*}
\int_{D_0}\|x\|\nu(dx)=c_\alpha^{-1}\int_0^1
\int_{\B_{\mathcal K}}\lambda(dy)\dfrac{dr}{r^\alpha}=c_\alpha^{-1}
\dfrac{1}{1-\alpha}\lambda(\B_\mathcal K)
\end{equation*}
is finite. Theorem~\ref{ThBochner} implies the existence of the integrals $$\int_{D_0}x\nu(dx)\text{ and }\int_{B_\mathcal K}x\lambda(dx)$$
in the sense of Bochner. Clearly, the second integral belongs to $\mathcal K.$\sp

All that remains is to solve for $\gamma\in\mathcal K,$ so that $\gamma_0=\gamma-\mathcal I_\nu,$ where $\mathcal I_\nu= c_\alpha^{-1}
\frac{1}{1-\alpha}\int_{B_\mathcal K}x\lambda(dx)$ belongs to $\mathcal K.$\sp

This gives $L^{\mathcal{K}}_t$ and then from there ${L^K_t}^* = j^{-1}(L^{\mathcal{K}}_t).$
\end{example}

\begin{appendices}
\section{The Gelfand-Pettis and Bochner integrals}
The {\em Gelfand-Pettis} integral may be defined not only for functions with values in a Banach space but also in the more general setting of topological vector space valued functions. This very short exposition is restricted to Banach spaces. More on the Pettis integral one can find in \cite{Mus02}.
\begin{definition}
Let $B$ be a Banach space. Let
$f$ be a measurable $B$-valued function defined on a measure space
$(X,\mathcal B, m).$ The {\em Gelfand-Pettis integral} of
$f$ is a vector $\mathcal I_f\in B$
such that, for $\ell\in B^*,$
$$\ell(\mathcal I_f)=\int_X\ell(f(x))m(dx).$$
\end{definition}
\begin{definition}
A function $f:X\to B$ is said to be scalarly measurable, if $\ell(f)$
is measurable for every $\ell\in B^*.$
\end{definition}
\begin{definition}
A function $f:X\to B$ is strongly $\mathcal B$-measurable if there is a sequence of simple functions strongly convergent to $f$ $m$-a.e. on $X.$
\end{definition}
In spite of the relatively easy and natural definition of the Pettis integral one of the main problem in the theory of Pettis integration is to find (easily  verifiable) conditions which guarantee its existence. Two results in this direction are gathered below.
\par
But first we need one definition.
\begin{definition} A function $f:\Omega\to B$ is said to be {\em scalarly measurable}, if $\ell(f)$ is measurable for each $\ell\in B^*.$
\end{definition}
The following theorem, which gives a sufficient condition, was proved independently by Dimitrov in 1971 and Diestel in 1974.
\begin{theorem}[Dimitrov, {\cite{Dim71} and Diestel, \cite{Die73}}]
If $B$ is a separable Banach space which does not contain any isomorphic copy of $c_0$ (the space of real null sequences)
then every strongly measurable and scalarly integrable
$B$-valued
function $f$ is Pettis integrable.
\end{theorem}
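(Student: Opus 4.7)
The plan is to invoke the Bessaga--Pelczynski characterisation of Banach spaces without a copy of $c_0$: a Banach space $B$ contains no isomorphic copy of $c_0$ if and only if every \emph{weakly unconditionally Cauchy} series $\sum_n x_n$ in $B$ (that is, a series for which $\sum_n |\ell(x_n)|<\infty$ for every $\ell\in B^*$) converges unconditionally in norm. This is the only nontrivial input; everything else is a reduction to the countable-valued case.

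First I would use strong measurability of $f$ together with separability of $B$ to produce, after discarding a null set, a representation of the form
\[
f(x)\;=\;\sum_{n=1}^{\infty} x_n\,\one_{A_n}(x),
\]
with $x_n\in B$ and $\{A_n\}$ a countable, pairwise disjoint measurable partition of the essential domain of $f$. This follows from the approximating sequence of simple functions guaranteed by strong measurability: one refines their underlying partitions into a single common countable partition and notes that, off a null set, the resulting step function equals $f$.

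Second, for each $E\in\mathcal B$ I would define the candidate integral as the formal series
\[
\mathcal I_E\;:=\;\sum_{n=1}^{\infty} m(E\cap A_n)\,x_n.
\]
Scalar integrability of $f$ then gives, for every $\ell\in B^*$,
\[
\sum_{n=1}^{\infty} m(E\cap A_n)\,|\ell(x_n)|\;\le\;\int_X|\ell(f)|\,dm\;<\;\infty,
\]
so the series defining $\mathcal I_E$ is weakly unconditionally Cauchy in $B$. The hypothesis that $B$ avoids $c_0$ now, via Bessaga--Pelczynski, upgrades this to unconditional norm convergence, so $\mathcal I_E$ is a well-defined vector in $B$. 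Applying any $\ell\in B^*$ to the norm-convergent sum and invoking dominated convergence on the partial sums of $\ell(f)$ over $E$ yields
\[
\ell(\mathcal I_E)\;=\;\sum_{n=1}^{\infty} m(E\cap A_n)\,\ell(x_n)\;=\;\int_E\ell(f)\,dm,
\]
which is precisely the defining property of the Gelfand--Pettis integral of $f$ over $E$.

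The hard part will be the first step: engineering a countable step-function representation of $f$ which holds $m$-a.e.\ and is compatible with the scalar integrability hypothesis, so that the formal series above really is weakly unconditionally Cauchy. A subtlety is that the natural enumeration produced by the approximation procedure need not a priori be summable, and one may have to regroup the terms and exploit scalar integrability to justify term-by-term manipulations. Once this reduction is in place, however, the role of the no-$c_0$ hypothesis becomes pinpoint-precise and the proof is finished by a direct appeal to Bessaga--Pelczynski.
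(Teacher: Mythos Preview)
The paper does not actually prove this theorem; it is merely quoted in Appendix~A with references to Dimitrov and Diestel, so there is no authors' argument to compare against. Your overall strategy---manufacture a series in $B$ and invoke Bessaga--Pe\l{}czy\'nski---is indeed the heart of the classical proof, but your first step contains a genuine error.

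A strongly measurable function is \emph{not} countably valued in general, so the representation $f=\sum_{n} x_n\,\one_{A_n}$ you claim simply does not exist. Your justification (``refine the partitions underlying the approximating simple functions into a single common countable partition on which the limit is a step function'') fails on two counts: the common refinement of countably many finite partitions is typically uncountable (dyadic partitions of $[0,1]$ already separate all points), and even when it happens to be countable there is no reason the pointwise limit $f$ should be constant on its atoms. The identity map $f(x)=x$ on $[0,1]$ with Lebesgue measure, viewed as a function into $B=\R$, is strongly measurable and scalarly integrable yet admits no countable step-function representation. What the standard argument does instead is partition $X$ into measurable pieces $X_n$ of finite measure on which $\|f\|$ is bounded (possible because strong measurability makes $\|f(\cdot)\|$ measurable); on each $X_n$ the function $f$ is then \emph{Bochner} integrable, and the resulting vectors $y_n=\int_{X_n} f\,dm\in B$ form a weakly unconditionally Cauchy series by scalar integrability of $f$. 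At that point Bessaga--Pe\l{}czy\'nski applies exactly as you describe. So your use of the no-$c_0$ hypothesis is correct and well placed; it is the reduction feeding it that must go through local Bochner integrability rather than a nonexistent countable step-function decomposition.
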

A sufficient and necessary condition for Pettis-integrability was formulated by Talagrand in 1984:
\begin{theorem}[Talagrand, {\cite{Tal84}}]
Let $B$ be an arbitrary Banach space and $f:X\to B$ be scalarly integrable. Then $f$ is Pettis-integrable with respect to the measure $m$ if and only if there is a WCG space\footnote{WCG stands for {\em weakly compactly generated}.
A WCG space is a Banach space possessing a weakly compact subset whose linear span is dense.} $V\subset B$ such that $\ell(f)=0$ a.e. for every $\ell\in V^\bot$ and $T_f:B^*\to L^1(m),$ defined by $Tf(\ell)= \ell(f),$ is weakly compact.
\end{theorem}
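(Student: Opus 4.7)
The plan is to prove both directions of Talagrand's characterization by exploiting the standard fact that scalar integrability always produces a weak (Dunford) integral in the bidual $B^{**}$, while Pettis integrability asks that this Dunford integral actually land in $B$ for every measurable set. The two conditions in the statement are designed to precisely capture when this upgrade from $B^{**}$ to $B$ succeeds: the WCG hypothesis is a structural condition on the range, while weak compactness of $T_f$ provides the uniform integrability needed to control the vector measure $E\mapsto\int_E f\,dm$.

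For the necessity direction, assuming $f$ is Pettis integrable, I would take $V$ to be the closed linear span of the set of Pettis integrals $\{\int_E f\,dm : E\in\mathcal{B}\}\subset B$. The first step is to show that the indefinite Pettis integral $\nu(E):=\int_E f\,dm$ is a countably additive $B$-valued vector measure whose range is relatively weakly compact (a Bartle--Dunford--Schwartz type argument applied in the Pettis setting). Since the closed linear span of a relatively weakly compact set is by definition WCG, $V$ is WCG. The orthogonality condition is immediate: for $\ell\in V^{\perp}$, one has $\int_E \ell(f)\,dm=\ell(\nu(E))=0$ for every $E$, forcing $\ell(f)=0$ a.e. Weak compactness of $T_f$ then follows by verifying uniform integrability of $\{\ell(f):\|\ell\|\leq 1\}$ in $L^1(m)$ via the Dunford--Pettis criterion, which is a standard consequence of the countable additivity of $\nu$.

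For the sufficiency direction, the goal is to exhibit, for each $E\in\mathcal{B}$, a Pettis integral lying in $V\subset B$. The Dunford integral $x_E^{**}\in B^{**}$ exists by scalar integrability, defined by $\langle x_E^{**},\ell\rangle=\int_E\ell(f)\,dm$. The orthogonality condition $\ell(f)=0$ a.e.\ on $V^{\perp}$ forces $x_E^{**}$ to annihilate $V^{\perp}$, hence $x_E^{**}\in V^{\perp\perp}\subset B^{**}$, which is the weak-$*$ closure of $V$ in $B^{**}$. I would then use weak compactness of $T_f$, via Dunford--Pettis, to obtain uniform integrability of $\{\ell(f):\|\ell\|\leq 1\}$, which is precisely what allows promotion of $x_E^{**}$ from the bidual into $B$. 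The WCG structure of $V$ is the final ingredient: through Grothendieck's characterization of weakly compact sets in WCG spaces (or equivalently, through the existence of a one-to-one bounded linear operator from $V$ into a reflexive space), one concludes that $x_E^{**}$ actually lies in $V$, yielding the Pettis integral.

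The main obstacle I expect is the sufficiency half, specifically the step that upgrades the Dunford integral $x_E^{**}\in B^{**}$ to an element of $B$. Weak compactness of $T_f$ alone only gives $\sigma(B^{**},B^*)$-compactness information about the range, and without additional structure this is insufficient to keep the candidate integrals inside $B$; it is precisely the WCG hypothesis on $V$ that supplies the missing structural ingredient. Balancing these two hypotheses, and in particular justifying the use of Grothendieck's or James' characterization of weak compactness in the WCG setting, is the delicate heart of Talagrand's theorem and what distinguishes this necessary-and-sufficient criterion from the merely sufficient Dimitrov--Diestel result recalled just above.
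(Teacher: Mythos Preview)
The paper does not prove this theorem. Talagrand's characterization is stated in Appendix~A purely as a background result from the literature, with a citation to Talagrand's 1984 memoir \cite{Tal84}, alongside the Dimitrov--Diestel theorem and Bochner's theorem, none of which are proved in the paper. So there is no ``paper's own proof'' to compare your proposal against.

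Your sketch is a plausible outline of the actual argument behind Talagrand's result, but assessing its correctness in detail would require engaging with the substantial machinery in \cite{Tal84} rather than with anything contained in this paper. If your intent was to supply a proof where the paper has none, note that this is a deep theorem whose full proof occupies a significant portion of Talagrand's memoir; your outline identifies the right ingredients (Dunford integral in the bidual, Bartle--Dunford--Schwartz for the range of the indefinite integral, Dunford--Pettis for uniform integrability, WCG structure to pull back from $B^{**}$ to $B$), but the sufficiency direction in particular is considerably more delicate than a paragraph sketch can convey.
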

Because of its duality definition the Pettis integral is usually called a {\em weak integral}. Now we define the {\em Bochner integral} which is a {\em strong integral}. Here we follow the presentation given in~\cite{Yos71}.
In order to define the Bochner integral first we need to introduce some preliminaries. We start with the integration of a simple function.
\begin{definition}
Let $B$ be a Banach space. A {\em simple function} $f:X\to B$ defined on a measure space $(X,\mathcal B,m)$ is a function of the form
$$f(x)=\sum_{j=1}^nc_j\one_{E_j}(x),$$
where $E_1,\ldots,E_n\in\mathcal B$
and
$c_1,\ldots,c_n$ are elements of $B.$ Then we define the {\em $m$-integral} of $f$ over $X$ by
$$\int_Xf(x)m(dx):=\sum_{j=1}^nc_jm(E_j).$$
\end{definition}
A limiting procedure allows us to integrate more complicated functions.
\begin{definition}
A function $f$ defined on a measure space $(X,\mathcal B,m)$ with values in a Banach space $B$ is {\em Bochner $m$-integrable}, if there is a sequence of simple functions $f_n$ which strongly converges to $f$ $m$-a.e. and
$$\lim_{n\to\infty}\int_X\|f(x)-f_n(x)\|m(dx)=0.$$
Then for every set $E\in\mathcal B$ the {\em Bochner integral} of $f$ over $E$ with respect to the measure $m$ is defined by
$$\int_Ef(x)m(dx)
:=s-\lim_{n\to\infty}\int_X\one_E(x)f_n(x)m(dx).$$
\end{definition}

In 1933 Bochner published the following result:
\begin{theorem}[Bochner, {\cite{Boc33}}]\label{ThBochner}
A strongly $\mathcal B$-measurable function $f$ is Boch\-ner $m$-integrable if and only if the function $x\mapsto\|f(x)\|$ is $m$-integrable.
\end{theorem}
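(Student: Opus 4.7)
The plan is to prove the two directions of the equivalence separately. Both rely on the defining property of Bochner $m$-integrability (approximation by simple functions whose norm-differences tend to zero in $L^1(m)$) together with the observation that for any simple $B$-valued function $f_n = \sum_j c_j \one_{E_j}$, the scalar map $\|f_n(\cdot)\|$ is itself a simple nonnegative real function, hence automatically $\mathcal B$-measurable and $m$-integrable as soon as the sets $E_j$ have finite $m$-measure.

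For the ``only if'' direction, suppose $f$ is Bochner $m$-integrable. By definition there exist simple $f_n$ with $f_n \to f$ strongly $m$-a.e.\ and $\int_X \|f(x) - f_n(x)\|\, m(dx) \to 0$. The reverse triangle inequality
\[
\bigl|\,\|f(x)\| - \|f_n(x)\|\,\bigr| \le \|f(x) - f_n(x)\|
\]
then forces $\int_X \bigl|\,\|f(x)\| - \|f_n(x)\|\,\bigr|\, m(dx) \to 0$. Hence the scalar sequence $\{\|f_n\|\}$ is Cauchy in $L^1(m)$ and converges $m$-a.e.\ to $\|f\|$, so $\|f\| \in L^1(m)$.

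For the ``if'' direction, assume $f$ is strongly $\mathcal B$-measurable with $\|f\| \in L^1(m)$. Strong measurability yields simple $g_n \to f$ strongly a.e., but nothing prevents $\|g_n\|$ from being arbitrarily large outside small sets, so a direct $L^1$-estimate of $\|f - g_n\|$ can fail. The key step is a truncation that forces pointwise domination by $\|f\|$: set
\[
f_n(x) := g_n(x)\, \one_{E_n}(x), \qquad E_n := \{x : \|g_n(x)\| \le 2\|f(x)\|\}.
\]
Each $f_n$ is simple (the set $E_n$ is measurable because $\|g_n\|$ and $\|f\|$ are). On the full-measure set where $g_n \to f$ one checks $f_n \to f$ a.e.: if $f(x) \ne 0$, then $\|g_n(x)\| \to \|f(x)\| < 2\|f(x)\|$, so $x \in E_n$ for all large $n$ and $f_n(x) = g_n(x) \to f(x)$; if $f(x) = 0$, then $f_n(x) = 0$ for every $n$ by construction. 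By design $\|f_n\| \le 2\|f\|$ pointwise, hence $\|f - f_n\| \le 3\|f\|$, an $m$-integrable majorant. Lebesgue's dominated convergence theorem, applied to the nonnegative scalar sequence $\|f - f_n\| \to 0$, yields $\int_X \|f(x) - f_n(x)\|\, m(dx) \to 0$, which is exactly Bochner $m$-integrability.

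The main obstacle is the truncation step in the ``if'' direction: strong measurability alone supplies only pointwise approximation by simple functions with no norm control, so one must engineer a device that simultaneously preserves a.e.\ convergence and produces a $\|f\|$-dominated sequence. Once such a sequence is in hand, the reduction to the classical scalar dominated convergence theorem is routine and closes the argument.
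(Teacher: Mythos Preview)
Your proof is correct and follows the standard textbook argument (essentially Yosida's treatment). Note, however, that the paper does not supply its own proof of this statement: Theorem~\ref{ThBochner} is merely quoted as Bochner's classical 1933 result with a citation to \cite{Boc33}, so there is no in-paper argument to compare against.

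A couple of minor points worth making explicit in your write-up. First, in the ``if'' direction you should remark that the truncated $f_n$ is not just finitely-valued but an \emph{integrable} simple function: on each nonzero level set $\{g_n = c_j\}\cap E_n$ one has $\|f(x)\|\ge \|c_j\|/2>0$, and Markov's inequality together with $\|f\|\in L^1(m)$ forces that set to have finite measure. Without this the expression $\int_X \one_E f_n\,dm$ used in the paper's definition of the Bochner integral would not be guaranteed to make sense. Second, in the ``only if'' direction you implicitly use that $\|f\|$ is $\mathcal B$-measurable; this follows since $\|f\|$ is the $m$-a.e.\ limit of the simple real functions $\|f_n\|$, but it is worth stating. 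With these small additions your argument is complete.
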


\section{Proof of Theorem~\ref{Th10}}

Here we present sketch of the proof of \cite[Theorem 10]{P-AR-A06} which is stated in our paper as Theorem~\ref{Th10}. In the following we use the notation from our paper. For easy reference we restate the theorem:

\begin{theorem}[P\'erez-Abreu and Rocha-Artega, {\cite[Theorem~10]{P-AR-A06}}]
Let $\mathcal{K}$ be a proper cone in a separable Banach space $B.$ Let $X_t$, $t\geq 0$ be a L\'evy process in $B$ with generating triplet $(A,\Pi,\gamma).$ Assume the following three conditions:
\begin{itemize}
\item[(a)] $A=0,$
\item[(b)] $\Pi(B\setminus \mathcal{K})=0.$
\item[(c)] there exists a $\Pi$-Pettis centering $\mathcal I_\Pi=\int_{D_0}x\Pi(dx)$ such that $\gamma_0:=\gamma-\mathcal I_\Pi\in C.$
\end{itemize}
Then the process $X_t$ is a $\mathcal{K}$-subordinator, i.e., for all $t\geq0,$ $X_t\in \mathcal{K}.$
\end{theorem}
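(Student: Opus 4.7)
The plan is to approximate $X_t$ by compound Poisson processes with drift, each of whose values lies in $\mathcal{K}$ by an elementary convex-cone argument, and then pass to the limit using the closedness of $\mathcal{K}$ in $B$. First I would use conditions (a)--(c) to recast the L\'evy--Khinchin exponent of $X_t$ in the regular-subordinator form~\eqref{specialform_of_L_K}: (a) kills the Brownian term; (b) restricts the L\'evy integral to $\mathcal{K}$; and (c) lets one invoke the Pettis identity $\ell(\mathcal{I}_\Pi)=\int_{D_0}\ell(x)\,\Pi(dx)$ to move the compensator $-i\ell(x)\one_{D_0}(x)$ out of the integrand and absorb it into a clean drift $\gamma_0\in\mathcal{K}$, giving
\begin{equation*}
\e\,e^{i\ell(X_t)}=\exp\left(it\ell(\gamma_0)+t\int_{\mathcal{K}}(e^{i\ell(x)}-1)\,\Pi(dx)\right),\qquad \ell\in B^*.
\end{equation*}

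Next I would truncate. Choose $\epsilon_n\downarrow 0$ and set $\Pi_n:=\Pi|_{\{\|x\|>\epsilon_n\}}$, a finite measure supported in $\mathcal{K}\setminus\{0\}$. Coupling to the underlying Poisson random measure of jumps of $X$ (available by the L\'evy--It\^o decomposition since $A=0$), put
\begin{equation*}
X_t^{(n)}:=t\gamma_0+\sum_{s\leq t,\,\|\Delta X_s\|>\epsilon_n}\Delta X_s.
\end{equation*}
This is a compound Poisson process with drift whose characteristic exponent is obtained from the display above upon replacing $\Pi$ by $\Pi_n$. Since $\gamma_0\in\mathcal{K}$ by (c) and every jump lies in $\mathcal{K}$ by (b), and $\mathcal{K}$ is a convex cone, a pathwise induction over the finitely many jumps in $[0,t]$ gives $X_t^{(n)}\in\mathcal{K}$ a.s.

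Finally I would pass to the limit $n\to\infty$. The residual $Y_t^{(n)}:=X_t-X_t^{(n)}$ is an independent L\'evy process with characteristic functional
\begin{equation*}
\e\,e^{i\ell(Y_t^{(n)})}=\exp\left(t\int_{0<\|x\|\leq\epsilon_n}(e^{i\ell(x)}-1)\,\Pi(dx)\right).
\end{equation*}
Using $|e^{i\ell(x)}-1|\leq|\ell(x)|$ together with the Pettis-finiteness $\int_{D_0}|\ell(x)|\,\Pi(dx)<\infty$ from (c), dominated convergence drives the exponent to $0$, so $\ell(Y_t^{(n)})\to 0$ in probability for every $\ell\in B^*$. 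Upgrading this to $\|Y_t^{(n)}\|\to 0$ in probability, extracting an a.s.-convergent subsequence, and using closedness of the convex cone $\mathcal{K}$ yields $X_t=\lim_n X_t^{(n)}\in\mathcal{K}$ a.s.

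The main obstacle is exactly this last upgrade from scalar to norm convergence. In a general separable Banach space the L\'evy measure only satisfies the weak square-integrability $\int_{D_0}|\ell(x)|^2\Pi(dx)<\infty$, so the textbook Hilbert-space $L^2$-martingale estimate on the compensated small-jump integral is not directly available. A clean substitute exploits the cone itself: by construction $X_t^{(n)}$ is $\leq_{\mathcal{K}}$-increasing in $n$, and combined with scalar convergence of $\ell(X_t^{(n)})$ for every $\ell$ in a countable separating subfamily of the dual cone $\mathcal{K}^*$, the closedness of $\mathcal{K}$ in $B$ forces the conclusion without ever needing genuine norm convergence. In the concrete setting of the paper, where $B=L^p((0,1]\times\s^{d-1})$, classical $L^p$-martingale estimates for compensated Poisson integrals also yield strong convergence directly.
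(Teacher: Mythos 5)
Your decomposition --- drift $t\gamma_0$ plus the jumps of norm $>\epsilon_n$ (each lying in $\mathcal K$ because $\Pi(B\setminus\mathcal K)=0$, so the truncated sum stays in the convex cone pathwise) plus a small-jump residual $Y_t^{(n)}$ whose characteristic functional tends to $1$ --- is the same skeleton as the sketch in Appendix~B, and your use of the Pettis centering to absorb the compensator into $\gamma_0$ is exactly how conditions (a)--(c) enter. The one step you should not rely on is the ``upgrade'' from $\ell(Y_t^{(n)})\to 0$ for each fixed $\ell\in B^*$ to $\|Y_t^{(n)}\|\to 0$ in probability: in a general separable Banach space this is not available (you control only one functional at a time, and the $L^2$-isometry for compensated Poisson integrals is a Hilbert-space tool), and you correctly flag this yourself. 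Keep only your second route and make it precise: for every $\ell$ in the dual cone $\mathcal K^*=\{\ell\in B^*:\ell\geq 0\text{ on }\mathcal K\}$ one has $\ell(X_t)=\ell(X_t^{(n)})+\ell(Y_t^{(n)})\geq\ell(Y_t^{(n)})$, and letting $n\to\infty$ along an a.s.\ convergent subsequence gives $\ell(X_t)\geq 0$ a.s.; by separability of $B$ and Hahn--Banach separation there is a countable family $\{\ell_m\}\subset\mathcal K^*$ with $\mathcal K=\{x:\ell_m(x)\geq 0\text{ for all }m\}$, whence $X_t\in\mathcal K$ a.s. Two remarks: this identification of $\mathcal K$ with its bipolar requires $\mathcal K$ to be norm-closed, a hypothesis not contained in the paper's definition of ``proper cone'' but needed by any proof (and satisfied in the intended application $\mathcal K=j\bigl(\FK\bigr)$); and the $\leq_{\mathcal K}$-monotonicity of $X_t^{(n)}$ in $n$ is not actually needed --- the membership $X_t^{(n)}\in\mathcal K$ alone suffices.
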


\begin{proof}[Proof of Theorem~\ref{Th10}]
For a given set $E$ in $\mathcal B_0,$ the ring of Borel sets of the Banach space $B$ with positive distance from $0,$ define {\em the jump size process} - the process which reflects the sum of jumps of the process $X_t^{\mathcal{K}}$ which occurred up to time $t$ and took place in $E,$
\begin{equation*}
X_t^{\mathcal K}(E)=\sum_{s<t}\left(X_s^{\mathcal K}-X_{s-}^{\mathcal K}\right)\one_E\left(X_s^{\mathcal K}-X_{s-}^{\mathcal K}\right),
\end{equation*}
where $$X_{s-}^{\mathcal K}=\lim_{s\uparrow t}X_s^{\mathcal K}$$ and $\one_E(\cdot)$ is the indicator function
\begin{equation*}
\one_E(x)=\begin{cases}1&\text{ if $x\in E,$}\\
0&\text{ otherwise.}
\end{cases}
\end{equation*}
For each $\varepsilon>0$ consider the jump size process $X_t^{\mathcal K,\Delta_\varepsilon},$ where $\Delta_\varepsilon=\{x\in L^p((0,1]\times\s^{d-1}):\|x\|>\varepsilon\}\in\mathcal B_0.$

Let $E\in\mathcal B_0.$ Then, by the assumption $\nu(E)=0.$ From this it follows that $X_t^{\mathcal K,\Delta_\varepsilon}$ is $\mathcal K$-valued a.s.. In fact, if there were an $E$ with $\p(X_t^{\mathcal K,E}\not=0)>0$ then, since $\p(X_t^{\mathcal K,E}\not=0)\leq 1-e^{-t\nu(E)}=0$ we would get a contradiction.

Next we notice that since $\gamma_0\in\mathcal K$ it is enough to prove that the process $\tilde X_t^{\mathcal K}:=X_t^{\mathcal K}-t\gamma_0$ is $\mathcal K$-valued a.s..

Clearly, $\tilde X_t^{\mathcal K}$ and $X_t^{\mathcal K}$ have the same jumps. Thus is $\tilde X_t^{\mathcal K,\Delta_\varepsilon}=X_t^{\mathcal K,\Delta_\varepsilon}$ and consequently $\tilde X_t^{\mathcal K,\Delta_\varepsilon}$ is $\mathcal K$-valued.

Then the final step is to show that $\tilde X_t^{\mathcal K}-\tilde X_t^{\mathcal K,\Delta_\varepsilon}$ and $\tilde X_t^{\mathcal K,\Delta_\varepsilon}$ are independent and $\mathcal K$-valued which finishes the proof (for details see \cite{P-AR-A06}).
\end{proof}
\end{appendices}

\end{document}